\def\tsc#1{\csdef{#1}{\textsc{\lowercase{#1}}\xspace}}
\newtheorem{assum}{Assumption}
\newtheorem{theorem}{Theorem}
\newtheorem{lemma}[theorem]{Lemma}
\newdefinition{definition}{Definition}
\newdefinition{remark}{Remark}
\newproof{proof}{Proof}
\newcommand*\diff{\mathop{}\!\mathrm{d}}%
\begin{document}
\let\WriteBookmarks\relax
\def\floatpagepagefraction{1}
\def\textpagefraction{.001}
%\shorttitle{Leveraging social media news}
\shortauthors{H. Lhachemi et~al.}

\title [mode = title]{Exponential Input-to-State Stabilization of a Class of Diagonal Boundary Control Systems with Delay Boundary Control}                      
\tnotemark[1]

\tnotetext[1]{This publication has emanated from research supported in part by a research grant from Science Foundation Ireland (SFI) under grant number 16/RC/3872 and is co-funded under the European Regional Development Fund and by I-Form industry partners.}

%\tnotetext[2]{The second title footnote which is a longer text matter
%   to fill through the whole text width and overflow into
%   another line in the footnotes area of the first page.}

\author[1]{Hugo Lhachemi}[
						%type=editor,
                        %auid=000,bioid=1,
                        %prefix=Sir,
                        %role=Researcher,
                        orcid=0000-0002-2782-4208
                        ]
                        
\cormark[1]
%\fnmark[1]
\ead{hugo.lhachemi@ucd.ie}
%\ead[url]{www.cvr.cc, cvr@sayahna.org}

%\credit{Conceptualization of this study, Methodology, Software}

\address[1]{School of Electrical and Electronic Engineering, University College Dublin, Dublin 4, D04 V1W8, Ireland}

\author[1,2]{Robert Shorten}[
						%type=editor,
                        %auid=000,bioid=1,
                        %prefix=Sir,
                        %role=Researcher,
                        orcid=0000-0002-9239-2499
                        ]
                        
\ead{robert.shorten@ucd.ie}

\address[2]{Dyson School of Design Engineering, Imperial College London, London, U.K.}

\author[3]{Christophe Prieur}[
						%type=editor,
                        %auid=000,bioid=1,
                        %prefix=Sir,
                        %role=Researcher,
                        orcid=0000-0002-4456-2019
                        ]

\ead{christophe.prieur@gipsa-lab.fr}

\address[3]{CNRS, Grenoble-INP, GIPSA-Lab, Universit{\'e} Grenoble Alpes, 38000 Grenoble, France}

\cortext[cor1]{Corresponding author}

%\fntext[fn1]{This is the first author footnote. but is common to third author as well.}
%\fntext[fn2]{Another author footnote, this is a very long footnote and it should be a really long footnote. But this footnote is not yet sufficiently long enough to make two lines of footnote text.}

%\nonumnote{This note has no numbers. In this work we demonstrate $a_b$ the formation Y\_1 of a new type of polariton on the interface between a cuprous oxide slab and a polystyrene micro-sphere placed on the slab.}

\begin{abstract}
This paper deals with the exponential input-to-state stabilization with respect to boundary disturbances of a class of diagonal infinite-dimensional systems via delay boundary control. The considered input delays are uncertain and time-varying. The proposed control strategy consists of a constant-delay predictor feedback controller designed on a truncated finite-dimensional model capturing the unstable modes of the original infinite-dimensional system. We show that the resulting closed-loop system is exponentially input-to-state stable with fading memory of both additive boundary input perturbations and disturbances in the computation of the predictor feedback.
\end{abstract}

%\begin{graphicalabstract}
%\includegraphics{figs/grabs.pdf}
%\end{graphicalabstract}

%\begin{highlights}
%\item Research highlights item 1
%\item Research highlights item 2
%\item Research highlights item 3
%\end{highlights}

\begin{keywords}
Infinite-dimensional systems \sep Riesz-spectral operator \sep Delay boundary control \sep Input-to-state stability \sep Fading Memory
\end{keywords}

\maketitle

\section{Introduction}

Feedback stabilization of finite-dimensional systems in the presence of input delays has been a very active research topic during the past decades~\cite{artstein1982linear,richard2003time}. Motivated by the delay boundary control of Partial Differential Equations (PDEs), the opportunity of extending this topic to infinite-dimensional systems has recently attracted much attention~\cite{fridman2009exponential,solomon2015stability}. One of the early contributions on input delayed unstable PDEs, reported in~\cite{krstic2009control}, deals with a reaction-diffusion equation with a controller designed by resorting to the backstepping technique. More recently, the opportunity to use a predictor feedback for the stabilization of a reaction-diffusion equation was reported in~\cite{prieur2018feedback}. The proposed control strategy, inspired by the early works~\cite{coron2004global,coron2006global,russell1978controllability} dealing with delay-free boundary feedback control, goes as follows. First, a finite-dimensional truncated model capturing the unstable modes of the infinite-dimensional system is obtained via spectral reduction. Then, using the Artstein transformation for handling the input delay, a predictor feedback is designed to stabilize the truncated model. Finally, the stability of the closed-loop infinite-dimensional system is assessed via a Lyapunov-based argument. This strategy was reused in~\cite{guzman2019stabilization} for the delay boundary feedback stabilization of a linear Kuramoto-Sivashinsky equation. This was then generalized to the boundary feedback stabilization of a class of diagonal infinite-dimensional systems with delay boundary control for either a constant~\cite{lhachemi2019feedback,lhachemi2019control} or a time-varying~\cite{lhachemi2019lmi} input delay. 

In this paper, we investigate the exponential input-to-state stabilization with respect to boundary disturbances of a class of diagonal infinite-dimensional systems via delay boundary control. In this setting, the considered input delay is uncertain and time-varying. The main motivation in achieving an input-to-state stabilization of the closed-loop system relies in the fact that the Input-to-State Stability (ISS) property, originally introduced by Sontag in~\cite{sontag1989smooth}, is one of the main tools for assessing the robustness of a system with respect to boundary disturbances. This property also plays a key role in the establishment of small gain conditions for the stability of interconnected systems~\cite{karafyllis2019preview}. Although the study of ISS properties of finite-dimensional systems has been intensively studied during the last three decades, its extension to infinite-dimensional systems, and in particular with respect to boundary disturbances, is more recent~\cite{argomedo2012d,jacob2018infinite,jacob2018continuity,karafyllis2016iss,karafyllis2017iss,karafyllis2019preview,lhachemi2019input,lhachemi2018iss,mironchenko2019monotonicity,mironchenko2016restatements,mironchenko2017characterizations,zheng2018giorgi,zheng2017input}. Moreover, most of these results deal with the establishment of ISS properties for open-loop stable distributed parameter systems. The literature regarding the input-to-state stabilization of open-loop unstable infinite-dimensional systems is less developed.

In the context of recent efforts about the establishment of ISS properties w.r.t. exogenous disturbances for predictor feedback control of finite-dimensional systems~\cite{cai2017input,selivanov2016observer}, the present paper extends the results reported in~\cite{lhachemi2019feedback,lhachemi2019lmi} regarding the use of a constant-delay predictor feedback for the delayed boundary stabilization of a class of diagonal infinite-dimensional systems. The validity of such an approach was first assessed in~\cite{lhachemi2019feedback} for a constant, and known, input delay and then in~\cite{lhachemi2019lmi} for an unknown and time-varying input delay via Lyapunov-based arguments. While such an approach allows the derivation of an ISS estimate with respect to distributed disturbances~\cite{lhachemi2019feedback}, it fails in the establishement of an ISS estimate, in strict form\footnote{More precisely, this approach only allows the derivation of an ISS estimate with respect to both the boundary perturbation and its time derivative, but not an ISS estimate in strict form, i.e., with respect to the only magnitude of the boundary perturbation.}, with respect to boundary disturbances. It is worth noting that this increased difficulty regarding the establishment of ISS estimates w.r.t. boundary disturbances comparing to distributed ones seems to be a global trend for infinite-dimensional systems~\cite{mironchenko2019input}. In this paper, under the assumption of a sector condition on the eigenvalues corresponding to the modes which are not captured by the truncated model used for the design of the predictor feedback, we show that the resulting infinite-dimensional closed-loop system is exponentially ISS with fading memory~\cite{karafyllis2019preview} of the boundary disturbances for small variations of the time-varying delay around its nominal value. The adopted approach relies first on the extension of a small gain argument reported in~\cite{karafyllis2013delay} in order to establish the ISS property of the closed-loop truncated model, and then on the method reported in~\cite{lhachemi2018iss} for the establishment of ISS estimates with respect to boundary disturbances for diagonal infinite-dimensional systems.

This paper is organized as follows. The investigated control problem, the proposed control strategy, and the main result of this paper are introduced in Section~\ref{sec: problem setting and main results}. In Section~\ref{sec: exponential ISS of the truncated model} is reported the stability analysis of the finite-dimensional truncated model. Then, the proof of the main result of this paper, namely the ISS property of the resulting closed-loop infinite-dimensional system, is presented in Section~\ref{sec: exponential ISS of the infinite-dimensional system}. The relaxation of the assumed regularity assumptions for the boundary disturbances is discussed in Section~\ref{sec: exponential ISS for relaxed initial conditions and perturbations}. Finally, concluding remarks are formulated in Section~\ref{sec: conclusion}.

\section{Problem setting and main result}\label{sec: problem setting and main results}
The sets of non-negative integers, positive integers, real, non-negative real, positive real, and complex numbers are denoted by $\mathbb{N}$, $\mathbb{N}^*$, $\mathbb{R}$, $\mathbb{R}_+$, $\mathbb{R}_+^*$, and $\mathbb{C}$, respectively. Throughout the paper, the field $\mathbb{K}$ is either $\mathbb{R}$ or $\mathbb{C}$. All the finite-dimensional spaces $\mathbb{K}^p$ are endowed with the usual euclidean inner product $\left<x,y\right> = x^* y$ and the associated 2-norm $\Vert x \Vert = \sqrt{\left<x,x\right>} = \sqrt{x^* x}$. For any matrix $M \in \mathbb{K}^{p \times q}$, $\Vert M \Vert$ stands for the induced norm of $M$ associated with the above 2-norms. For any $t_0>0$, we say that $\varphi \in \mathcal{C}^0(\mathbb{R};\mathbb{R})$ is a \textit{transition signal over $[0,t_0]$} if $0 \leq \varphi \leq 1$, $\left. \varphi \right\vert_{(-\infty,0]} = 0$, and $\left. \varphi \right\vert_{[t_0,+\infty)} = 1$.

\subsection{Preliminary definitions}

Throughout the paper, $(\mathcal{H},\left< \cdot , \cdot \right>_\mathcal{H})$ denotes a separable Hilbert space over the field $\mathbb{K}$. 

\begin{definition}[Boundary control system~\cite{Curtain2012}]
Consider the abstract system taking the form:
\begin{equation}\label{def: boundary control system}
\left\{\begin{split}
\dfrac{\mathrm{d} X}{\mathrm{d} t}(t) & = \mathcal{A} X(t) , & t \geq 0 \\
\mathcal{B} X (t) & = v(t) , & t \geq 0 \\
X(0) & = X_0 
\end{split}\right.
\end{equation}
with $\mathcal{A} : D(\mathcal{A}) \subset \mathcal{H} \rightarrow \mathcal{H}$ an (unbounded) operator, $\mathcal{B} : D(\mathcal{B}) \subset \mathcal{H} \rightarrow \mathbb{K}^m$ with $D(\mathcal{A}) \subset D(\mathcal{B})$ the boundary operator, $v : \mathbb{R}_+ \rightarrow \mathbb{K}^m$ a boundary input, and $X_0 \in \mathcal{H}$ an initial condition. We say that $(\mathcal{A},\mathcal{B})$ is a boundary control system if:
\begin{enumerate}
\item the disturbance-free operator $\mathcal{A}_0$, defined on the domain $D(\mathcal{A}_0) \triangleq D(\mathcal{A}) \cap \mathrm{ker}(\mathcal{B})$ by $\mathcal{A}_0 \triangleq \left.\mathcal{A}\right|_{D(\mathcal{A}_0)}$, is the generator of a $C_0$-semigroup $S$ on $\mathcal{H}$;
\item there exists a bounded operator $B \in \mathcal{L}(\mathbb{K}^m,\mathcal{H})$, called a lifting operator, such that $\mathrm{R}(B) \subset D(\mathcal{A})$, $\mathcal{A}B \in \mathcal{L}(\mathbb{K}^m,\mathcal{H})$ (i.e., is a bounded operator), and $\mathcal{B}B = I_{\mathbb{K}^m}$.
\end{enumerate}
\end{definition}

\begin{definition}[Riesz spectral operator~\cite{Curtain2012}]
Let $\mathcal{A}_0 : D(\mathcal{A}_0) \allowbreak \subset \mathcal{H} \rightarrow \mathcal{H}$ be a linear and closed operator with simple eigenvalues $\lambda_n$ and corresponding eigenvectors $\phi_n \in D(\mathcal{A}_0)$, $n \in \mathbb{N}^*$. $\mathcal{A}_0$ is a Riesz-spectral operator if
\begin{enumerate}
\item $\left\{ \phi_n , \; n \in \mathbb{N}^* \right\}$ is a Riesz basis~\cite{christensen2016introduction}:
\begin{enumerate}
\item $\left\{ \phi_n , \; n \in \mathbb{N}^* \right\}$ is maximal, i.e., $\overline{ \underset{n\in\mathbb{N}^*}{\mathrm{span}_\mathbb{K}} \;\phi_n } = \mathcal{H}$;
\item there exist constants $m_R, M_R \in \mathbb{R}_+^*$ such that, for all $N \in \mathbb{N}^*$ and all $\alpha_1 , \ldots , \alpha_N \in \mathbb{K}$,
\end{enumerate}
\begin{equation}\label{eq: Riesz basis - inequality}
m_R \sum\limits_{n=1}^{N} \vert \alpha_n \vert^2
\leq
\left\Vert \sum\limits_{n=1}^{N} \alpha_n \phi_n \right\Vert_\mathcal{H}^2
\leq
M_R \sum\limits_{n=1}^{N} \vert \alpha_n \vert^2 ;
\end{equation}
\item the closure of $\{ \lambda_n , \; n \in \mathbb{N}^* \}$ is totally disconnected, i.e. for any distinct $a,b \in \overline{ \{ \lambda_n , \; n \in \mathbb{N}^* \} }$, we have $[a,b] \triangleq \{x a + (1-x) b \,:\, x\in[0,1]\} \not\subset \overline{ \{ \lambda_n , \; n \in \mathbb{N}^* \} }$.
\end{enumerate}
\end{definition}

\begin{remark}
Let $\left\{ \psi_n , \; n \in \mathbb{N}^* \right\}$ be the biorthogonal sequence associated with $\left\{ \phi_n , \; n \in \mathbb{N}^* \right\}$, i.e., $\left< \phi_n , \psi_m \right>_\mathcal{H} = \delta_{n,m}$. Then $\psi_n$ is an eigenvector of the adjoint operator $\mathcal{A}_0^*$ associated with $\overline{\lambda_n}$. Moreover, the following series expansion holds:
\begin{equation}\label{eq: series expansion Riesz basis}
\forall z \in\mathcal{H} , \quad 
z = \sum\limits_{n \geq 1} \left< z , \psi_n \right>_\mathcal{H} \phi_n .
\end{equation}
\end{remark}

\subsection{Problem and proposed control strategy}\label{subsec: problem and control strategy}

Let $D_0 > 0$ and $\delta \in (0,D_0)$ be given. We consider the abstract boundary control system (\ref{def: boundary control system}) for which the boundary input $v$ takes the form:
\begin{equation}\label{eq: boundary input}
v(t) = u(t-D(t)) + d_1(t) 
\end{equation}
for all $t \geq 0$ with $d_1 : \mathbb{R}_+ \rightarrow \mathbb{K}^m$ a boundary disturbance, $u : [-D_0-\delta,+\infty) \rightarrow \mathbb{K}^m$ the boundary control with $\left. u \right\vert_{[-D_0-\delta,0]} = 0$, and $D:\mathbb{R}_+ \rightarrow [D_0-\delta,D_0+\delta]$ a time-varying delay.

\begin{assum}\label{assum: A1}
The disturbance-free operator $\mathcal{A}_0$ is a Riesz spectral operator.
\end{assum}

Then, the $C_0$-semigroup generated by $\mathcal{A}_0$ is given by
\begin{equation}\label{eq: expression C0-semigroup}
\forall z \in\mathcal{H} , \quad 
\forall t \geq 0 , \quad
S(t)z = \sum\limits_{n \geq 1} e^{\lambda_n t} \left< z , \psi_n \right>_\mathcal{H} \phi_n .
\end{equation}

\begin{assum}\label{assum: A2}
There exist $N_0 \in \mathbb{N}^*$ and $\alpha \in \mathbb{R}_+^*$ such that 
\begin{enumerate}
\item $\operatorname{Re} \lambda_n \leq - \alpha$ for all $n \geq N_0 + 1$ ;
\item $\xi \triangleq \sup\limits_{n \geq N_0 +1} \left\vert \dfrac{\lambda_n}{\operatorname{Re} \lambda_n} \right\vert < \infty$.
\end{enumerate}
\end{assum}

\begin{remark}
If the first point of Assumption~\ref{assum: A2} holds, the second point $\xi < \infty$ is equivalent to the existence of a constant $\beta > 0$ such that $\vert \operatorname{Im} \lambda_n \vert \leq \beta \vert \operatorname{Re} \lambda_n \vert$ for all $n \geq N_0 + 1$. 
\end{remark}

The boundary feedback stabilization problem of the considered system was solved in~\cite{lhachemi2019lmi} in the disturbance-free case by designing a constant-delay predictor feedback on a finite dimensional truncated model capturing the unstable modes of the infinite-dimensional system. In this paper, we go beyond the result reported in~\cite{lhachemi2019lmi} by considering the impact of boundary disturbances while relaxing the assumed regularity properties and compatibility conditions. Specifically, assuming that the control input\footnote{The construction of the control law must ensure this property.} $u$, the time-varying delay $D$, and the boundary disturbance $d_1$ are of class $\mathcal{C}^1$, then, for any given initial condition $X_0 \in \mathcal{H}$, we can introduce $X \in \mathcal{C}^0(\mathbb{R}_+;\mathcal{H})$ defined for all $t \geq 0$ by
\begin{align}
X(t) & = S(t) \{ X_0 - B v(0) \} + B v(t) \label{eq: def mild solution} \\ 
& \phantom{=}\; + \int_0^t S(t-s) \{ \mathcal{A}Bv(s) - B \dot{v}(s) \} \diff s \nonumber
\end{align}
as the unique mild solution of (\ref{def: boundary control system}), with control input $v$ given by (\ref{eq: boundary input}), associated with $(D,X_0,d_1)$. We introduce the series expansion $X(t) = \sum_{n \geq 1} c_n(t) \phi_n$ with $c_n(t) \triangleq \left< X(t) , \psi_n \right>_\mathcal{H}$ the coefficients of projection of the system trajectory $X(t)$ into the Riesz basis $\left\{ \phi_n , \; n \in \mathbb{N}^* \right\}$. The use of (\ref{eq: def mild solution}), combined with (\ref{eq: series expansion Riesz basis}) and (\ref{eq: expression C0-semigroup}), and an integration by parts, show that $c_n$ statisfies
\begin{align}
c_n(t) & = 
e^{\lambda_n t} c_n(0) \label{eq: coeff in Riesz basis integral form} \\
& \phantom{=}\; + \int_0^t e^{\lambda_n (t-\tau)} \left\{ \left< \mathcal{A} B v(\tau) , \psi_n \right>_\mathcal{H} - \lambda_n \left< B v(\tau) , \psi_n \right>_\mathcal{H}  \right\} \diff\tau \nonumber
\end{align}
for all $t \geq 0$. Thus $c_n \in \mathcal{C}^1(\mathbb{R}_+;\mathbb{K})$ and satisfies for all $t \geq 0$ the following ODE (see also~\cite{lhachemi2018iss}):
\begin{equation}\label{eq: coeff in Riesz basis ODE}
\dot{c}_n(t) 
= \lambda_n c_n(t) - \lambda_n \left< B v(t) , \psi_n \right>_\mathcal{H} + \left< \mathcal{A} B v(t) , \psi_n \right>_\mathcal{H} .
\end{equation}
Let $\mathcal{E} = (e_1,e_2,\ldots,e_m)$ be the canonical basis of $\mathbb{K}^m$. Then, introducing\footnote{Note that the quantity $b_{n,k}$ is independent of the specifically selected lifting operator $B$ associated with $(\mathcal{A},\mathcal{B})$, see~\cite{lhachemi2019feedback}.} $b_{n,k} \triangleq - \lambda_n \left< B e_k , \psi_n \right>_\mathcal{H} + \left< \mathcal{A} B e_k , \psi_n \right>_\mathcal{H}$, we obtain that
\begin{subequations}\label{eq: ODE satisfies by Y}
\begin{align}
\dot{Y}(t) 
& = A_{N_0} Y(t) + B_{N_0} v(t) \nonumber \\
& = A_{N_0} Y(t) + B_{N_0} \{ u(t-D(t)) + d_1(t) \} , \\
Y(0) & = Y_0 ,
\end{align}
\end{subequations}
with 
\begin{equation}\label{eq: definition Y}
Y(t) = 
\begin{bmatrix}
c_1(t) & \ldots & c_{N_0}(t)
\end{bmatrix}^\top
\in \mathbb{K}^{N_0} ,
\end{equation}
the matrices $A_{N_0} = \mathrm{diag}(\lambda_1,\ldots,\lambda_{N_0}) \in \mathbb{K}^{N_0 \times N_0}$ and $B_{N_0} = (b_{n,k})_{1 \leq n \leq N_0 , 1 \leq k \leq m} \in \mathbb{K}^{N_0 \times m}$, and the initial condition
\begin{equation*}
Y_0 = 
\begin{bmatrix} 
\left< X_0 , \psi_1 \right>_\mathcal{H} & \ldots & \left< X_0 , \psi_{N_0} \right>_\mathcal{H} \end{bmatrix}
^\top
\in \mathbb{K}^{N_0} .
\end{equation*}

\begin{assum}\label{assum: A3}
$(A_{N_0},B_{N_0})$ is stabilizable.
\end{assum}

Under Assumption~\ref{assum: A3}, one can design a predictor feedback achieving the stabilization of the truncated model (\ref{eq: ODE satisfies by Y}). Then, following~\cite{lhachemi2019lmi}, such a predictor feedback can be successfully applied to the original infinite-dimensional system. Specifically, let $t_0,D_0 > 0$ and $\delta \in (0,D_0)$ be given. We consider a given transition signal\footnote{See the notation section at the beginning of Section~\ref{sec: problem setting and main results}.} $\varphi \in \mathcal{C}^1(\mathbb{R};\mathbb{R})$ over $[0,t_0]$. We assume that $D \in \mathcal{C}^1(\mathbb{R}_+;\mathbb{R})$ with $\vert D - D_0 \vert \leq \delta$. The closed-loop system dynamics takes the following form:
\begin{subequations}\label{def: boundary control system - closed-loop} 
\begin{align}
\dfrac{\mathrm{d} X}{\mathrm{d} t}(t) & = \mathcal{A} X(t) , \label{def: boundary control system - closed-loop - diff eq} \\
\mathcal{B} X (t) & = v(t) = u(t-D(t)) + d_1(t) , \label{def: boundary control system - closed-loop - bound cond} \\
u(t) & = \varphi(t) \Bigg\{  K Y(t)  + d_2(t) \label{def: boundary control system - closed-loop - cont input} \\
& \phantom{=}\; \hspace{1cm} + K \int_{\max(t-D_0,0)}^{t} e^{(t-s-D_0)A_{N_0}} B_{N_0} u(s) \diff s \Bigg\} , \nonumber \\
X(0) & = X_0 \label{def: boundary control system - closed-loop - ini cond}
\end{align}
\end{subequations}
for any $t \geq 0$. The adopted control strategy takes the form of a state-feedback in which the signal $Y(t)$ is computed based on the knowledge of the state $X(t)$ via (\ref{eq: definition Y}).  The feedback gain $K \in \mathbb{K}^{m \times N_0}$ is selected such that the matrix $A_\mathrm{cl} \triangleq A_{N_0} + e^{-D_0 A_{N_0}} B_{N_0} K$ is Hurwitz. Functions $d_1 , d_2 : \mathbb{R}_+ \rightarrow \mathbb{K}^{m}$ represent boundary disturbances.

\begin{remark}
Examples of systems covered by Assumptions~\ref{assum: A1}-\ref{assum: A3} and thus for which the proposed control strategy applies include reaction-diffusion equations~\cite{lhachemi2019boundary1,prieur2018feedback}, linear Kuramoto-Sivashinsky equation~\cite{guzman2019stabilization}, and certain damped flexible string or beam models~\cite[Ex.~2.23, p.~91]{Curtain2012}\cite{lhachemi2019boundary2}. For this type of system, the objective of the present paper is to establish a qualitative behavior regarding the closed-loop system dynamics (\ref{def: boundary control system - closed-loop}), namely an ISS property with respect to boundary disturbances $d_1$ and $d_2$.
\end{remark}

\begin{remark}
While disturbance $d_1$ represents an additive disturbance in the application of the delayed boundary control $u$, disturbance $d_2$ gathers uncertainties of either/both the output measurement $Y$ or/and the computation of the control law $u$ that is solution of a ``fixed point implicit equality'' involving an integral term~\cite{bresch2018new}. The existence and uniqueness of solutions for such an implicit equation has been assessed in~\cite{bresch2018new} in the case $\varphi = 1$. The proofs reported therein directly extend to the configuration studied in this paper by noting that $\varphi$ is a continuous function with $0 \leq \varphi \leq 1$. Moreover, as $Y$ is solution of the ODE (\ref{eq: ODE satisfies by Y}), it can be shown that the closed-loop dynamics (\ref{def: boundary control system - closed-loop}) with $Y$ given by (\ref{eq: definition Y}) is actually equivalent to the dynamics (\ref{def: boundary control system - closed-loop}) with $Y$ explicitly given by 
\begin{align*}
Y(t) & = e^{A_{N_0}t}Y_0 + \int_0^t e^{A_{N_0}(t-\tau)} B_{N_0}\left\{ u(\tau-D(\tau)) + d_1(\tau) \right\} \diff\tau .
\end{align*}
Note however that this second form is not convenient for practical implementation as it requires the knowledge of the disturbance $d_1$ in real-time.
\end{remark}

\subsection{Well-posedness in terms of mild solutions} 

In the first part of this paper, we  consider the following concept of mild solutions for the closed-loop system dynamics. 

\begin{definition}\label{def: mild solution closed-loop system}
Let $(\mathcal{A},\mathcal{B})$ be an abstract boundary control system such that Assumption~\ref{assum: A1} holds. Let $t_0,D_0 > 0$, $\delta \in (0,D_0)$, a transition signal $\varphi \in \mathcal{C}^1(\mathbb{R};\mathbb{R})$ over $[0,t_0]$, and $K \in \mathbb{K}^{m \times N_0}$ be arbitrary. For a time-varying delay $D \in \mathcal{C}^1(\mathbb{R}_+;\mathbb{R})$ with $\vert D - D_0 \vert \leq \delta$, an initial condition $X_0 \in \mathcal{H}$, and boundary perturbations $d_1,d_2\in\mathcal{C}^1(\mathbb{R}_+;\mathbb{K}^m)$, we say that $(X,u) \in \mathcal{C}^0(\mathbb{R}_+;\mathcal{H}) \times \mathcal{C}^1([-D_0-\delta,+\infty);\mathbb{K}^m)$ is a mild solution of (\ref{def: boundary control system - closed-loop}) associated with $(D,X_0,d_1,d_2)$ if 1) (\ref{eq: def mild solution}) holds for all $t \geq 0$ with $v$ given by (\ref{eq: boundary input}); 2) $u$ satisfies (\ref{def: boundary control system - closed-loop - cont input}) for all $t \geq -D_0-\delta$ with $Y$ defined by (\ref{eq: definition Y}).
\end{definition}

The following lemma, whose proof is placed in Appendix~\ref{annex: proof lemma well-posedness}, assesses the well-posedness of the closed-loop system (\ref{def: boundary control system - closed-loop}) in terms of mild solutions.

\begin{lemma}\label{lemma: well-posedness closed-loop system}
For any $D \in \mathcal{C}^1(\mathbb{R}_+;\mathbb{R})$ with $\vert D - D_0 \vert \leq \delta$, $X_0 \in \mathcal{H}$, and $d_1,d_2 \in \mathcal{C}^1(\mathbb{R}_+;\mathbb{K}^m)$, the closed-loop system (\ref{def: boundary control system - closed-loop}) admits a unique mild solution $(X,u) \in \mathcal{C}^0(\mathbb{R}_+;\mathcal{H}) \times \mathcal{C}^1([-D_0-\delta,+\infty);\mathbb{K}^m)$ associated with $(D,X_0,d_1,d_2)$.
\end{lemma}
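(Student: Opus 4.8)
The plan is to establish well-posedness by decoupling the problem into two pieces: first constructing the control input $u$ as the solution of the implicit fixed-point equation (\ref{def: boundary control system - closed-loop - cont input}), and then recovering $X$ from the mild-solution formula (\ref{eq: def mild solution}). The key observation is that the definition of a mild solution couples $X$ and $u$ only through the finite-dimensional signal $Y(t)$, which by the second remark admits the closed, explicit representation $Y(t) = e^{A_{N_0}t}Y_0 + \int_0^t e^{A_{N_0}(t-\tau)} B_{N_0}\{u(\tau-D(\tau)) + d_1(\tau)\} \diff\tau$ depending only on $u$, $D$, $d_1$, and $Y_0$. Hence the existence and uniqueness of $X$ is decoupled: once a continuous $u$ is fixed, the right-hand side of (\ref{eq: def mild solution}) is completely determined and lies in $\mathcal{C}^0(\mathbb{R}_+;\mathcal{H})$ by standard properties of the $C_0$-semigroup $S$ together with the boundedness of $B$ and $\mathcal{A}B$, so the real work is to produce $u$.

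I would construct $u$ by a step-by-step (method-of-steps) argument exploiting the structure of the delay. On the initial interval $[-D_0-\delta,0]$ we have $\left.u\right|_{[-D_0-\delta,0]}=0$ and $\varphi$ vanishes on $(-\infty,0]$, so the control is determined there. To extend $u$ forward, note that in (\ref{def: boundary control system - closed-loop - cont input}) the term $KY(t)$ depends on $u$ only through values $u(\tau-D(\tau))$ with $\tau\le t$, i.e. through $u$ on $[-D_0-\delta, t-(D_0-\delta)]$, which is strictly in the past by at least $D_0-\delta>0$; the only genuinely implicit dependence is the integral $K\int_{\max(t-D_0,0)}^t e^{(t-s-D_0)A_{N_0}}B_{N_0}u(s)\diff s$, which involves present values of $u$. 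I would therefore fix a small horizon $h>0$ and set up a contraction mapping on $\mathcal{C}^0([0,h];\mathbb{K}^m)$ (or $\mathcal{C}^1$, after first establishing continuity and then bootstrapping regularity) whose fixed point is $u$: the map sends a candidate $u$ to the right-hand side of (\ref{def: boundary control system - closed-loop - cont input}), and the integral operator $u\mapsto K\int_{\max(t-D_0,0)}^t e^{(t-s-D_0)A_{N_0}}B_{N_0}u(s)\diff s$ is a Volterra-type operator whose Lipschitz constant over $[0,h]$ is $O(h)$, hence a contraction for $h$ small. This is exactly the situation treated in~\cite{bresch2018new} for the case $\varphi=1$; as the authors note, the presence of the continuous factor $\varphi$ with $0\le\varphi\le1$ does not affect the contraction estimate, so the cited existence-uniqueness result extends verbatim. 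Iterating over successive intervals $[0,h],[h,2h],\dots$ yields $u$ on all of $[-D_0-\delta,+\infty)$.

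Having obtained a continuous $u$, I would upgrade its regularity to $\mathcal{C}^1$. Since $Y$ solves the linear ODE (\ref{eq: ODE satisfies by Y}) with $\mathcal{C}^0$ (indeed $\mathcal{C}^1$) data, $Y\in\mathcal{C}^1$; the integral term in (\ref{def: boundary control system - closed-loop - cont input}) is an integral of a continuous integrand and is therefore $\mathcal{C}^1$ in $t$; and $\varphi,d_2\in\mathcal{C}^1$. Differentiating (\ref{def: boundary control system - closed-loop - cont input}) then shows $\dot u$ exists and is continuous, so $u\in\mathcal{C}^1([-D_0-\delta,+\infty);\mathbb{K}^m)$, with continuity across the junction $t=0$ guaranteed by the compatibility $\left.u\right|_{[-D_0-\delta,0]}=0$ and $\varphi(0)=0$. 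Finally, feeding this $u$ into (\ref{eq: def mild solution}) defines $X\in\mathcal{C}^0(\mathbb{R}_+;\mathcal{H})$, and the pair $(X,u)$ satisfies both conditions of Definition~\ref{def: mild solution closed-loop system}; uniqueness follows from the uniqueness of the fixed point on each subinterval together with the uniqueness of the formula (\ref{eq: def mild solution}) once $u$ is fixed.

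The main obstacle I anticipate is the implicit (fixed-point) character of the control law arising from the present-time dependence in the Artstein-type integral, together with verifying that the contraction argument of~\cite{bresch2018new} carries over cleanly in the presence of $\varphi$ and of the time-varying delay $D$; the delay $D$ is benign precisely because it enters only through strictly-past values of $u$, but one must check uniformly over $t$ that the instantaneous coupling is controlled by the integral term alone. Everything downstream—the $\mathcal{C}^1$ bootstrapping and the construction of $X$—is routine once the fixed point is secured.
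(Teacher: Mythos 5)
Your argument is correct and rests on the same two pillars as the paper's proof -- the strictly positive lower bound $D_0-\delta$ on the delay, which makes the feedback through $Y$ strictly retarded, and the solvability of the implicit Artstein-integral equation from~\cite{bresch2018new} (extended to $0\le\varphi\le1$) -- but it is organized differently. The paper runs a joint induction on $(X,u)$ over windows of length $D_0-\delta$: on each new window the control depends only on the state already constructed on the previous window, the fixed-point result produces a continuous $\tilde u$ there, regularity is upgraded to $\mathcal{C}^1$ via the Artstein variable $Z$, and only then is $\tilde X$ extended through the mild-solution formula. You instead decouple completely: using the explicit Duhamel representation of $Y$ in terms of $u$, $D$, $d_1$, $Y_0$, you obtain a closed finite-dimensional functional equation for $u$ alone, solve it by the method of steps with a Volterra-type contraction, bootstrap $u$ to $\mathcal{C}^1$, and recover $X$ in one shot. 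What your route buys is a cleaner separation of the finite-dimensional fixed-point problem from the infinite-dimensional state; what it costs is that you must explicitly close the loop with Definition~\ref{def: mild solution closed-loop system}: having defined $X$ by (\ref{eq: def mild solution}) from your $\mathcal{C}^1$ input, you still need to check that the projections $\left< X(t),\psi_n\right>_\mathcal{H}$ satisfy (\ref{eq: coeff in Riesz basis integral form})--(\ref{eq: coeff in Riesz basis ODE}) and therefore reproduce exactly the Duhamel $Y$ you used to build $u$ (and, for uniqueness, the converse implication that any mild solution's $Y$ obeys the Duhamel formula). This is precisely the computation of Subsection~\ref{subsec: problem and control strategy} and the content of the remark you invoke, so it is not a gap, but it should be stated rather than left implicit, since condition 2) of the definition is phrased in terms of $Y$ given by (\ref{eq: definition Y}), not by the explicit integral representation.
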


\begin{remark}
If we assume the stronger regularity assumptions $\varphi\in\mathcal{C}^2(\mathbb{R};\mathbb{R})$, $D \in \mathcal{C}^2(\mathbb{R}_+;\mathbb{R})$, $d_1,d_2 \in \mathcal{C}^2(\mathbb{R}_+;\mathbb{K}^m)$, as well as the compatibility condition $X_0 \in D(\mathcal{A})$ with $\mathcal{B}X_0 = d_1(0)$, it can be shown that the mild solution is actually a classical solution with a control input $u$ that is twice continuously differentiable.
\end{remark}

\subsection{Main stability result}

The stability of the closed-loop system (\ref{def: boundary control system - closed-loop}) in the disturbance free case (i.e., for $d_1 = d_2 = 0$) was assessed in~\cite{lhachemi2019feedback,lhachemi2019control} for a constant delay $D(t) = D_0$ and in~\cite{lhachemi2019lmi} for an uncertain and time-varying delay $D(t)$. The objective of this paper is to study the impact of the boundary disturbances $d_1$ and $d_2$ on the system trajectories. More precisely, we derive the following result.

\begin{theorem}\label{thm: ISS mild solutions}
Let $(\mathcal{A},\mathcal{B})$ be an abstract boundary control system such that Assumptions~\ref{assum: A1}, \ref{assum: A2}, and~\ref{assum: A3} hold. Let $\varphi \in \mathcal{C}^1(\mathbb{R};\mathbb{R})$ be a transition signal over $[0,t_0]$ for some $t_0 > 0$. Let $D_0 > 0$ and $K \in \mathbb{K}^{m \times N_0}$ be such that $A_\mathrm{cl} = A_{N_0} + e^{-D_0 A_{N_0}} B_{N_0} K$ is Hurwitz. Let $\delta \in (0,D_0)$ be such that\footnote{Such a $\delta > 0$ always exists by a continuity argument in $\delta = 0$.} 
\begin{equation}\label{eq: thm infinite dim system - small gain condition}
M_\lambda \Vert B_{N_0} K \Vert \left[ e^{\Vert A_\mathrm{cl} \Vert \delta} - e^{-\lambda \delta} \right] < \lambda ,
\end{equation}
where $\lambda > 0$ and $M_\lambda \geq 1$ are such that $\Vert e^{A_\mathrm{cl} t} \Vert \leq M_\lambda e^{-\lambda t}$ for all $t \geq 0$. Then, there exist $\kappa \in (0,\alpha)$ and $\overline{C}_i > 0$, $1 \leq i \leq 6$, such that, for any $D \in \mathcal{C}^1(\mathbb{R}_+;\mathbb{R})$ with $\vert D - D_0 \vert \leq \delta$, $X_0 \in \mathcal{H}$, and  $d_1,d_2 \in \mathcal{C}^1(\mathbb{R}_+;\mathbb{K}^m)$, the mild solution $(X,u) \in \mathcal{C}^0(\mathbb{R}_+;\mathcal{H}) \times \mathcal{C}^1([-D_0-\delta,+\infty);\mathbb{K}^m)$ of the closed-loop system (\ref{def: boundary control system - closed-loop}) associated with $(D,X_0,d_1,d_2)$ satisfies
\begin{align}
\Vert X(t) \Vert_\mathcal{H}
& \leq \overline{C}_1 e^{- \kappa t} \Vert X_0 \Vert_\mathcal{H} 
+ \overline{C}_2 \sup\limits_{\tau \in [0,t]} e^{-\kappa (t-\tau)} \Vert d_1(\tau) \Vert \label{eq: ISS estimate} \\
& \phantom{\leq}\, + \overline{C}_3 \sup\limits_{\tau \in [0,\max(t-(D_0-\delta),0)]} e^{-\kappa (t-\tau)} \Vert d_2(\tau) \Vert \nonumber 
\end{align}
and
\begin{align}
\Vert u(t) \Vert 
& \leq
\overline{C}_4 e^{- \kappa t} \Vert X_0 \Vert_\mathcal{H} 
+ \overline{C}_5 \sup\limits_{\tau \in [0,t]} e^{-\kappa (t-\tau)} \Vert d_1(\tau) \Vert \label{eq: ISS estimate command} \\
& \phantom{\leq}\, + \overline{C}_6 \sup\limits_{\tau \in [0,t]} e^{-\kappa (t-\tau)} \Vert d_2(\tau) \Vert . \nonumber
\end{align}
for all $t \geq 0$.
\end{theorem}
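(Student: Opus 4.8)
The plan is to exploit the spectral decomposition $X(t)=\sum_{n\ge 1}c_n(t)\phi_n$ together with the Riesz-basis inequality (\ref{eq: Riesz basis - inequality}), which gives $\Vert X(t)\Vert_{\mathcal H}\le \sqrt{M_R}\big(\Vert Y(t)\Vert+(\sum_{n\ge N_0+1}|c_n(t)|^2)^{1/2}\big)$ since $\Vert Y(t)\Vert^2=\sum_{n=1}^{N_0}|c_n(t)|^2$. This reduces the theorem to two sub-problems: (i) an exponential ISS estimate for the finite-dimensional closed-loop truncated model carrying the modes $n\le N_0$, and (ii) a fading-memory estimate of the infinite-dimensional tail $n\ge N_0+1$. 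Because the boundary input $v(t)=u(t-D(t))+d_1(t)$ forcing the tail is itself controlled by part (i), the two parts combine at the end.

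For part (i) (Section~\ref{sec: exponential ISS of the truncated model}) I would analyze (\ref{eq: ODE satisfies by Y}) under the predictor feedback (\ref{def: boundary control system - closed-loop - cont input}). Introducing the Artstein/predictor variable $Z(t)=e^{A_{N_0}D_0}Y(t)+\int_{t-D_0}^{t}e^{A_{N_0}(t-s)}B_{N_0}u(s)\,\diff s$, the nominal constant-delay disturbance-free dynamics reduce to $\dot Z=A_{\mathrm{cl}}Z$, which is stable since $A_{\mathrm{cl}}$ is Hurwitz with $\Vert e^{A_{\mathrm{cl}}t}\Vert\le M_\lambda e^{-\lambda t}$. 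The disturbances $d_1,d_2$ and the time-varying delay enter as an additive perturbation of this reduced dynamics, the delay mismatch producing a term built from $u(t-D(t))-u(t-D_0)=\int_{t-D_0}^{t-D(t)}\dot u(s)\,\diff s$, which is controlled through $\vert D-D_0\vert\le\delta$. Extending the small-gain argument of~\cite{karafyllis2013delay}, condition (\ref{eq: thm infinite dim system - small gain condition}) is precisely what closes the loop and yields exponential ISS estimates for $\Vert Y(t)\Vert$ and $\Vert u(t)\Vert$, the latter being (\ref{eq: ISS estimate command}), with the stated time-shifted dependence on $d_1,d_2$.

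For part (ii) (Section~\ref{sec: exponential ISS of the infinite-dimensional system}) I would start from the variation-of-constants form (\ref{eq: coeff in Riesz basis integral form}), equivalently (\ref{eq: coeff in Riesz basis ODE}), written as $c_n(t)=e^{\lambda_n t}c_n(0)+\int_0^t e^{\lambda_n(t-\tau)}b_n^\top v(\tau)\,\diff\tau$ with $b_{n,k}=-\lambda_n\langle Be_k,\psi_n\rangle_{\mathcal H}+\langle\mathcal ABe_k,\psi_n\rangle_{\mathcal H}$. The initial-condition part is bounded, up to a constant, by $e^{-\alpha t}\Vert X_0\Vert_{\mathcal H}$ using $\operatorname{Re}\lambda_n\le-\alpha$ and the biorthogonal Bessel bound $\sum_{n}|\langle X_0,\psi_n\rangle_{\mathcal H}|^2\le m_R^{-1}\Vert X_0\Vert_{\mathcal H}^2$. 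For the forced part, fixing $\kappa\in(0,\alpha)$ and factoring $e^{\lambda_n(t-\tau)}=e^{(\operatorname{Re}\lambda_n+\kappa)(t-\tau)}e^{-\kappa(t-\tau)}$, the decisive move is to bound the input coupling coarsely by $|b_n^\top v(\tau)|\le\Vert b_n\Vert\,\Vert v(\tau)\Vert$, so that the mode-independent quantity $\sup_{\tau\in[0,t]}e^{-\kappa(t-\tau)}\Vert v(\tau)\Vert$ pulls out of the $\tau$-integral and hence out of the sum over $n$, giving $(\sum_{n>N_0}|c_n^{\mathrm{forced}}(t)|^2)^{1/2}\le \tfrac{\alpha}{\alpha-\kappa}\big(\sum_{n>N_0}\Vert b_n\Vert^2/|\operatorname{Re}\lambda_n|^2\big)^{1/2}\sup_{\tau\in[0,t]}e^{-\kappa(t-\tau)}\Vert v(\tau)\Vert$. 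The main obstacle, and the point where Assumption~\ref{assum: A2} is indispensable, is the convergence of $\sum_{n>N_0}\Vert b_n\Vert^2/|\operatorname{Re}\lambda_n|^2$: the sector condition $|\lambda_n|/|\operatorname{Re}\lambda_n|\le\xi$ yields $\Vert b_n\Vert/|\operatorname{Re}\lambda_n|\le \xi\,(\sum_k|\langle Be_k,\psi_n\rangle_{\mathcal H}|^2)^{1/2}+\alpha^{-1}(\sum_k|\langle\mathcal ABe_k,\psi_n\rangle_{\mathcal H}|^2)^{1/2}$, and both sequences are square-summable in $n$ because $Be_k,\mathcal ABe_k\in\mathcal H$ and $\{\psi_n\}$ is a Bessel sequence. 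It is exactly this coarse, sector-enabled bound in terms of $\Vert v(\tau)\Vert$ that produces a derivative-free (strict magnitude) dependence on the boundary input, thereby avoiding the integration by parts that would otherwise reintroduce $\dot v$ and hence $\dot d_1$.

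Finally I would assemble the two parts. Since $v(\tau)=u(\tau-D(\tau))+d_1(\tau)$, the quantity $\sup_{\tau\in[0,t]}e^{-\kappa(t-\tau)}\Vert v(\tau)\Vert$ is bounded by the finite-dimensional ISS estimate for $u$ from part (i), absorbing the shift $D(\tau)\in[D_0-\delta,D_0+\delta]$ into the constants and the fading-memory weight, plus $\sup_{\tau}e^{-\kappa(t-\tau)}\Vert d_1(\tau)\Vert$. Combining this tail bound with the bound on $\Vert Y(t)\Vert$ from part (i), and using $\Vert Y_0\Vert\le m_R^{-1/2}\Vert X_0\Vert_{\mathcal H}$ via (\ref{eq: Riesz basis - inequality}), yields (\ref{eq: ISS estimate}); estimate (\ref{eq: ISS estimate command}) is inherited directly from part (i). Taking $\kappa$ to be the minimum of the finite-dimensional closed-loop decay rate and a value strictly below $\alpha$ provides the common rate $\kappa\in(0,\alpha)$ of the statement.
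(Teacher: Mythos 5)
Your proposal is correct and follows essentially the same route as the paper: the Artstein transformation combined with a small-gain lemma in the spirit of \cite{karafyllis2013delay} for the truncated model (Lemmas~\ref{lem: prel lemma} and~\ref{lem: ISS estimate finite dimensional part - preliminary}), a direct variation-of-constants estimate of the residual modes in which the sector condition of Assumption~\ref{assum: A2} replaces the integration by parts that would otherwise reintroduce $\dot{v}$, and assembly via the Riesz-basis inequality (\ref{eq: Riesz basis - inequality}). The only detail to handle with care is the delay-mismatch term: it must be written, as in (\ref{eq: lemma finite-dim - ODE Z}), as $B_{N_0}K\left\{[\varphi Z](t-D(t))-[\varphi Z](t-D_0)\right\}$ plus a difference of $d_2$-values (bounded through the $Z$-dynamics and the triangle inequality, respectively), and not literally as $\int_{t-D_0}^{t-D(t)}\dot{u}(s)\,\diff s$, since bounding $\dot{u}$ would bring in $\dot{d}_2$ and defeat the strict-form ISS estimate.
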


The next two sections are devoted to the proof of Theorem~\ref{thm: ISS mild solutions}. The extension of this result to continuous boundary disturbances $d_1,d_2$ is discussed in Section~\ref{sec: exponential ISS for relaxed initial conditions and perturbations}.

\begin{remark}
It is interesting to note that Theorem~\ref{thm: ISS mild solutions}, involving the sector condtion $\xi < +\infty$ (see Assumption~\ref{assum: A2}), does not introduce any constraint on the amplitude of variation of the time derivative $\dot{D}$ of the input delay $D$. This is in contrast with the result reported in~\cite{lhachemi2019lmi} for the disturbance-free case (i.e., $d_1=d_2=0$), which allows $\xi = +\infty$ but where the constant of the exponential stability property is a strictly increasing function, going to $+\infty$ at $+\infty$, of the supremum of $\vert \dot{D} \vert$. The occurence of a $\dot{D}$ term in the proof of the result reported in~\cite{lhachemi2019lmi} is due to the use of a Lyapunov-based argument. As discussed in the sequel of this paper, the assumption $\xi < +\infty$ allows a proof of Theorem~\ref{thm: ISS mild solutions} that does not rely on such a Lyapunov-based argument.
\end{remark}

\section{Exponential ISS of the truncated model}\label{sec: exponential ISS of the truncated model}

In this section, we study the ISS property of the finite dimensional truncated model. We refer the reader to~\cite{sontag2008input} for classical results about the establishment of ISS properties.

\subsection{Preliminary lemma}
We need the following preliminary lemma which is a disturbed version of the disturbance-free version ($p=0$) reported in~\cite[Th.~2.5]{karafyllis2013delay}.

\begin{lemma}\label{lem: prel lemma}
Let $A \in \mathbb{K}^{n \times n}$ be Hurwitz, $C \in \mathbb{K}^{n \times n}$, and $r > 0$. Let $\epsilon \in (0,r)$ be such that 
\begin{equation}\label{eq: prel lemma - small gain condition}
M_\lambda \Vert C \Vert \left[ e^{\Vert A \Vert \epsilon} - e^{-\lambda \epsilon} \right] < \lambda ,
\end{equation}
where $\lambda > 0$ and $M_\lambda \geq 1$ are such that $\Vert e^{At} \Vert \leq M_\lambda e^{-\lambda t}$ for all $t \geq 0$. Then, there exist $\sigma,N > 0$ and $M \geq 1$ such that, for any $d \in \mathcal{C}^0(\mathbb{R}_+;\mathbb{R})$ with $\vert d \vert \leq 1$, any $p,q \in \mathcal{C}^0(\mathbb{R}_+;\mathbb{K})$ with $\vert q \vert \leq 1$, and any $x_0 \in \mathcal{C}^0([-r-\epsilon,0];\mathbb{K}^n)$, the trajectory of 
\begin{subequations}\label{eq: prel lemma}
\begin{align}
\dot{x} (t) & = A x(t) + q(t) C \left[ x(t-r-\epsilon d(t)) - x(t-r) \right] + p(t) \label{eq: prel lemma - ODE} \\
x(\tau) & = x_0(\tau) , \quad -r-\epsilon \leq \tau \leq 0 \label{eq: prel lemma - ODE IC}
\end{align}
\end{subequations}
for $t \geq 0$ satisfies
\begin{equation}\label{eq: lemma - claimed estimate}
\Vert x(t) \Vert 
\leq
M e^{- \sigma t} \sup\limits_{\tau \in [ -r-\epsilon , 0 ]} \; \Vert x_0(\tau) \Vert 
+ N \sup\limits_{\tau \in [ 0 , t ]} \; e^{-\sigma (t-\tau)} \Vert p(\tau) \Vert 
\end{equation}
for all $t \geq 0$.
\end{lemma}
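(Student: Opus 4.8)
The plan is to read (\ref{eq: prel lemma - ODE}) as a delay perturbation of the nominal exponentially stable system $\dot x = A x + p$, in which the coupling term $q(t) C \left[ x(t-r-\epsilon d(t)) - x(t-r) \right]$ is forced to be small by the fact that its two delayed arguments never differ by more than $\epsilon$. Rather than a Lyapunov--Krasovskii functional, I would run the small-gain argument of \cite[Th.~2.5]{karafyllis2013delay} in a weighted fading-memory supremum norm, carrying the new forcing term $p$ through by linearity. Fixing $\sigma \in (0,\lambda)$ to be chosen and setting
\[
\Phi_\sigma(t) \triangleq \sup_{-r-\epsilon \leq \tau \leq t} e^{\sigma \tau} \Vert x(\tau) \Vert ,
\]
the objective is to derive a self-referential inequality for $\Phi_\sigma$ whose contraction gain drops below one exactly when (\ref{eq: prel lemma - small gain condition}) holds.

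First I would apply the variation-of-constants formula to (\ref{eq: prel lemma - ODE}), obtaining for $t \geq 0$
\[
x(t) = e^{At} x(0) + \int_0^t e^{A(t-s)} \left\{ q(s) C \left[ x(s-r-\epsilon d(s)) - x(s-r) \right] + p(s) \right\} \diff s ,
\]
and then use $\Vert e^{At} \Vert \leq M_\lambda e^{-\lambda t}$ together with $\vert q \vert \leq 1$. The decisive quantity is the delayed difference $\Delta(s) \triangleq x(s-r-\epsilon d(s)) - x(s-r)$. Since $\vert d \vert \leq 1$, the two arguments lie in a window of length at most $\epsilon$, so I would write $\Delta(s)$ as the integral of $\dot x$ over that window and bound $\dot x$ through (\ref{eq: prel lemma - ODE}) itself, namely $\Vert \dot x(\theta) \Vert \leq \Vert A \Vert \Vert x(\theta) \Vert + \Vert C \Vert \Vert \Delta(\theta) \Vert + \Vert p(\theta) \Vert$. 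Propagating these bounds through the weight $e^{\sigma \tau}$ and the convolution with $e^{A(t-s)}$, the local growth $e^{\Vert A \Vert \epsilon}$ of the flow over the window, measured against the decay reference $e^{-\lambda \epsilon}$, produces precisely the combination $e^{\Vert A \Vert \epsilon} - e^{-\lambda \epsilon}$ appearing in (\ref{eq: prel lemma - small gain condition}).

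Collecting terms yields an inequality of the schematic form
\[
\Phi_\sigma(t) \leq g(\sigma)\, \Phi_\sigma(t) + M \sup_{-r-\epsilon \leq \tau \leq 0} \Vert x_0(\tau) \Vert + N \sup_{0 \leq \tau \leq t} e^{\sigma \tau} \Vert p(\tau) \Vert ,
\]
where $g(\sigma) \to M_\lambda \Vert C \Vert \left[ e^{\Vert A \Vert \epsilon} - e^{-\lambda \epsilon} \right] / \lambda$ as $\sigma \to 0^+$. By hypothesis this limit is strictly below one, so by continuity there is $\sigma \in (0,\lambda)$ with $g(\sigma) < 1$; the $\Phi_\sigma(t)$ term is then absorbed on the left-hand side, which bounds $\Phi_\sigma(t)$ uniformly and gives (\ref{eq: lemma - claimed estimate}) after dividing by $e^{\sigma t}$. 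By construction the resulting $\sigma$, $M$ and $N$ depend only on $A$, $C$, $r$, $\epsilon$, $\lambda$, $M_\lambda$, and not on the particular $d$, $q$, $p$ or $x_0$.

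I expect the main obstacle to be closing the self-referential estimate on $\Delta$: the difference term is controlled via $\dot x$, which through (\ref{eq: prel lemma - ODE}) contains $\Delta$ again, so one must check that the recursion contracts and reproduces exactly the coefficient $M_\lambda \Vert C \Vert \left[ e^{\Vert A \Vert \epsilon} - e^{-\lambda \epsilon} \right]$ rather than a coarser bound that would demand more than (\ref{eq: prel lemma - small gain condition}). A secondary difficulty is the low regularity of the data: $x_0$ is only continuous on $[-r-\epsilon,0]$, so $\dot x$ need not exist there, and for $s$ in the initial layer (roughly $s \leq r + \epsilon$) the window reaches into $[-r-\epsilon,0]$; on that range the integral-of-$\dot x$ representation of $\Delta(s)$ must be replaced by a direct estimate in terms of $\sup_{\tau} \Vert x_0(\tau) \Vert$, which is exactly where the leading term $M e^{-\sigma t} \sup_\tau \Vert x_0(\tau) \Vert$ of (\ref{eq: lemma - claimed estimate}) originates.
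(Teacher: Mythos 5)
Your overall architecture (variation of constants from $t=0$, weighted fading-memory supremum, small gain with $\sigma\to0^+$, Gr{\"o}nwall on the initial layer) matches the paper's, but the step you yourself flag as the main obstacle is a genuine gap, and the method you propose for it does not close. Bounding the delayed difference $\Delta(s)=x(s-r-\epsilon d(s))-x(s-r)$ by $\int\Vert\dot x\Vert$ with $\Vert\dot x(\theta)\Vert\le\Vert A\Vert\,\Vert x(\theta)\Vert+\Vert C\Vert\,\Vert\Delta(\theta)\Vert+\Vert p(\theta)\Vert$ gives, after taking suprema over a window of length at most $\epsilon$, an inequality of the form $\sup\Vert\Delta\Vert\le\epsilon\Vert A\Vert\sup\Vert x\Vert+\epsilon\Vert C\Vert\sup\Vert\Delta\Vert+\epsilon\sup\Vert p\Vert$; you then need $\epsilon\Vert C\Vert<1$ just to solve for $\sup\Vert\Delta\Vert$, and the resulting loop gain on $x$ is, in the limit $\sigma\to0^+$, essentially $\tfrac{M_\lambda\Vert C\Vert}{\lambda}\cdot\tfrac{\epsilon\Vert A\Vert}{1-\epsilon\Vert C\Vert}$. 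This is not $\tfrac{M_\lambda\Vert C\Vert}{\lambda}\left[e^{\Vert A\Vert\epsilon}-e^{-\lambda\epsilon}\right]$, and it is not implied by the hypothesis (\ref{eq: prel lemma - small gain condition}): that hypothesis is compatible with $\epsilon\Vert C\Vert\ge1$ (e.g.\ $M_\lambda=1$, $\lambda=10$, $\epsilon=1$, $\Vert A\Vert=0.01$, $\Vert C\Vert=5$), in which case your recursion on $\Delta$ does not contract at all. As written you would therefore prove the lemma only under a strictly stronger smallness condition.

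The missing idea is to represent the difference via the variation-of-constants formula \emph{between the two delayed instants} rather than via $\int\dot x$. For $d(t)\le0$ (and symmetrically for $d(t)\ge0$, whence a sign split you should not skip) one writes $x(t-r-\epsilon d(t))=e^{-\epsilon Ad(t)}x(t-r)+\int_{t-r}^{t-r-\epsilon d(t)}e^{A(t-r-\epsilon d(t)-\tau)}\left[q(\tau)Cv(\tau)+p(\tau)\right]\diff\tau$ with $v=\Delta$, so that $v(t)=\left[e^{-\epsilon Ad(t)}-I\right]x(t-r)$ plus a convolution over a window of length at most $\epsilon$. The free part has norm at most $e^{\Vert A\Vert\epsilon}-1$ and couples $v$ to $x$; the forced part contributes the self-gain $M_\lambda\Vert C\Vert\tfrac{1-e^{-(\lambda-\sigma)\epsilon}}{\lambda-\sigma}$ on $v$. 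Running the small-gain loop on $v$, together with $\sup e^{\sigma\tau}\Vert x\Vert\le M_\lambda\Vert x_0(0)\Vert+\tfrac{M_\lambda\Vert C\Vert}{\lambda-\sigma}\sup e^{\sigma\tau}\Vert v\Vert+\cdots$, yields the total gain $\tfrac{M_\lambda\Vert C\Vert}{\lambda-\sigma}e^{\sigma r}\left[e^{\sigma\epsilon}(e^{\Vert A\Vert\epsilon}-1)+1-e^{-(\lambda-\sigma)\epsilon}\right]$, which tends to $\tfrac{M_\lambda\Vert C\Vert}{\lambda}\left[e^{\Vert A\Vert\epsilon}-e^{-\lambda\epsilon}\right]<1$ as $\sigma\to0^+$, exactly matching the hypothesis. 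One further point you omit: this argument first delivers the disturbance term as $\sup_{\tau\in[0,\max(t,2\epsilon)]}e^{\sigma\tau}\Vert p(\tau)\Vert$, and restoring causality (replacing $\max(t,2\epsilon)$ by $t$ for $0<t<2\epsilon$) requires an additional truncation-and-limit argument on $p$.
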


\begin{proof}
As the case $C = 0$ is straightforward, we assume in the sequel that $C \neq 0$. The first part of the proof follows the one in~\cite{karafyllis2013delay} while considering the impact of the disturbing term $p$. We define, for all $t \geq 0$, $v(t) = x(t-r-\epsilon d(t)) - x(t-r)$. Let $\sigma \in (0,\lambda)$, which will be specified in the sequel, be arbitrary. The proof is divided into 4 main steps.

\emph{Step 1: preliminary estimation of $\sup\limits_{\tau \in [ r+\epsilon , t ]} \; e^{\sigma \tau} \Vert v(\tau) \Vert$ by an upper estimate involving $\Vert x(\tau) \Vert$.}
As in~\cite{karafyllis2013delay}, we consider the cases $d(t) \leq 0$ and $d(t) \geq 0$ separately. In the case $d(t) \leq 0$, we have by direct integration of (\ref{eq: prel lemma - ODE}) that, for all $t \geq r$,
\begin{align*}
v(t)
& = \left[ e^{-\epsilon A d(t)} - I_n \right] x(t-r) \\
& \phantom{=}\, + \int_{t-r}^{t-r-\epsilon d(t)} e^{A(t-r-\epsilon d(t) - \tau)} \left[ q(\tau) C v(\tau) + p(\tau) \right] \diff\tau 
\end{align*}
because $t-r-\epsilon d(t) \geq t-r \geq 0$.
Noting that $\Vert e^{-\epsilon A d(t)} - I_n \Vert \leq e^{\Vert A \Vert \epsilon} - 1$,
\begin{align*}
& \left\Vert \int_{t-r}^{t-r-\epsilon d(t)} e^{A(t-r-\epsilon d(t) - \tau)} 
q(\tau) C v(\tau) \diff\tau \right\Vert \\
& \leq M_\lambda \Vert C \Vert \int_{t-r}^{t-r-\epsilon d(t)} e^{-\lambda (t-r-\epsilon d(t) - \tau)}  \left\Vert v(\tau) \right\Vert \diff\tau \\
& \leq M_\lambda \Vert C \Vert e^{-\lambda (t-r-\epsilon d(t))} \int_{t-r}^{t-r-\epsilon d(t)} e^{(\lambda-\sigma) \tau}  \times e^{\sigma \tau} \left\Vert v(\tau) \right\Vert \diff\tau \\
& \leq M_\lambda \Vert C \Vert e^{-\sigma (t-r)} e^{\sigma\epsilon d(t)} \dfrac{1-e^{(\lambda-\sigma)\epsilon d(t)}}{\lambda-\sigma} \sup\limits_{\tau \in [ t-r , t-r+\epsilon ]} \; e^{\sigma\tau} \Vert v(\tau) \Vert \\
& \leq M_\lambda \Vert C \Vert e^{-\sigma (t-r)} \dfrac{1-e^{-(\lambda-\sigma)\epsilon}}{\lambda-\sigma} \sup\limits_{\tau \in [ t-r , t-r+\epsilon ]} \; e^{\sigma\tau} \Vert v(\tau) \Vert ,
\end{align*}
where it has been used that $-1 \leq d(t) \leq 0$, and, similarly,
\begin{align*}
& \left\Vert \int_{t-r}^{t-r-\epsilon d(t)} e^{A(t-r-\epsilon d(t) - \tau)} 
p(\tau) \diff\tau \right\Vert \\
& \leq M_\lambda e^{-\sigma (t-r)} \dfrac{1-e^{-(\lambda-\sigma)\epsilon}}{\lambda-\sigma} \sup\limits_{\tau \in [ t-r , t-r+\epsilon ]} \; e^{\sigma\tau} \Vert p(\tau) \Vert ,
\end{align*}
we obtain that, for all $t \geq r$ such that $d(t) \leq 0$,
\begin{align}
e^{\sigma t} \Vert v(t) \Vert
& \leq \left[ e^{\Vert A \Vert \epsilon} - 1 \right] e^{\sigma r} \times e^{\sigma (t-r)} \Vert x(t-r) \Vert \label{eq: proof prel lemma - int 1} \\
& \phantom{\leq}\, + M_\lambda \Vert C \Vert e^{\sigma r} \dfrac{1-e^{-(\lambda-\sigma)\epsilon}}{\lambda-\sigma} \sup\limits_{\tau \in [ t-r , t-r+\epsilon ]} \; e^{\sigma\tau} \Vert v(\tau) \Vert \nonumber \\
& \phantom{\leq}\, + M_\lambda e^{\sigma r} \dfrac{1-e^{-(\lambda-\sigma)\epsilon}}{\lambda-\sigma} \sup\limits_{\tau \in [ t-r , t-r+\epsilon ]} \; e^{\sigma\tau} \Vert p(\tau) \Vert . \nonumber
\end{align}
Now, in the case $d(t) \geq 0$, we have by direct integration of (\ref{eq: prel lemma - ODE}) that, for all $t \geq r + \epsilon$,
\begin{align*}
v(t)
& = - \left[ e^{\epsilon A d(t)} - I_n \right] x(t-r-\epsilon d(t)) \\
& \phantom{=}\; - \int_{t-r-\epsilon d(t)}^{t-r} e^{A(t-r-\tau)} \left[ q(\tau) C v(\tau) + p(\tau) \right] \diff\tau 
\end{align*}
because $t-r \geq t-r-\epsilon d(t) \geq t-r-\epsilon \geq 0$. Then, 
we deduce that, for all $t \geq r+\epsilon$ such that $d(t) \geq 0$,
\begin{align}
& e^{\sigma t} \Vert v(t) \Vert \nonumber \\
& \leq \left[ e^{\Vert A \Vert \epsilon} - 1 \right] e^{\sigma (r+\epsilon)} \times e^{\sigma (t-r-\epsilon d(t))} \Vert x(t-r-\epsilon d(t)) \Vert \label{eq: proof prel lemma - int 2} \\
& \phantom{\leq}\, + M_\lambda \Vert C \Vert e^{\sigma r} \dfrac{1-e^{-(\lambda-\sigma)\epsilon}}{\lambda-\sigma} \sup\limits_{\tau \in [ t-r-\epsilon , t-r ]} \; e^{\sigma\tau} \Vert v(\tau) \Vert \nonumber \\
& \phantom{\leq}\, + M_\lambda e^{\sigma r} \dfrac{1-e^{-(\lambda-\sigma)\epsilon}}{\lambda-\sigma} \sup\limits_{\tau \in [ t-r-\epsilon , t-r ]} \; e^{\sigma\tau} \Vert p(\tau) \Vert . \nonumber 
\end{align}
Combining (\ref{eq: proof prel lemma - int 1}-\ref{eq: proof prel lemma - int 2}), we obtain that, for all $t \geq r+ \epsilon$,
\begin{align}
& \sup\limits_{\tau \in [ r+\epsilon , t ]} \; e^{\sigma \tau} \Vert v(\tau) \Vert \nonumber \\
& \leq \left[ e^{\Vert A \Vert \epsilon} - 1 \right] e^{\sigma (r+\epsilon)} \sup\limits_{\tau \in [ 0 , t-r ]} \; e^{\sigma \tau} \Vert x(\tau) \Vert \label{eq: proof prel lemma - int 3}\\
& \phantom{\leq}\; + M_\lambda \Vert C \Vert e^{\sigma r} \dfrac{1-e^{-(\lambda-\sigma)\epsilon}}{\lambda-\sigma} \sup\limits_{\tau \in [ 0 , t-r+\epsilon ]} \; e^{\sigma\tau} \Vert v(\tau) \Vert \nonumber \\
& \phantom{\leq}\; + M_\lambda e^{\sigma r} \dfrac{1-e^{-(\lambda-\sigma)\epsilon}}{\lambda-\sigma} \sup\limits_{\tau \in [ 0 , t-r+\epsilon ]} \; e^{\sigma\tau} \Vert p(\tau) \Vert . \nonumber
\end{align}

\emph{Step 2: preliminary estimation of $\sup\limits_{\tau \in [ 0 , t ]} \; e^{\sigma \tau} \Vert x(\tau) \Vert$ by an upper estimate involving involving $\Vert v(\tau) \Vert$.}
Integrating (\ref{eq: prel lemma - ODE}) over $[0,t]$, we obtain for all $t \geq 0$,
\begin{equation*}
x(t) = e^{At} x(0) + \int_{0}^{t} e^{A(t-\tau)} \left[ q(\tau) C v(\tau) + p(\tau) \right] \diff\tau .
\end{equation*}
As $x(0) = x_0(0)$, straightforward estimations show that, for all $t \geq 0$,
\begin{align}
\sup\limits_{\tau \in [ 0 , t ]} \; e^{\sigma \tau} \Vert x(\tau) \Vert
& \leq
M_\lambda \Vert x_0(0) \Vert
+ \dfrac{M_\lambda \Vert C \Vert}{\lambda - \sigma} \sup\limits_{\tau \in [ 0 , t ]} \; e^{\sigma\tau} \Vert v(\tau) \Vert \nonumber \\
& \phantom{\leq}\; + \dfrac{M_\lambda}{\lambda - \sigma} \sup\limits_{\tau \in [ 0 , t ]} \; e^{\sigma\tau} \Vert p(\tau) \Vert . \label{eq: proof prel lemma - int 4}
\end{align}

\emph{Step 3: estimation of $\sup\limits_{\tau \in [ 0 , t ]} \; e^{\sigma \tau} \Vert x(\tau) \Vert$ by an upper estimate involving only $\Vert x_0(\tau) \Vert$ and $\Vert p(\tau) \Vert$ via a small gain argument.}
From (\ref{eq: proof prel lemma - int 3}-\ref{eq: proof prel lemma - int 4}), we deduce that, for all $t \geq r+\epsilon$,
\begin{align*}
\sup\limits_{\tau \in [ r+\epsilon , t ]} \; e^{\sigma \tau} \Vert v(\tau) \Vert
& \leq M_\lambda e^{\sigma (r+\epsilon)} \left[ e^{\Vert A \Vert \epsilon} - 1 \right] \Vert x_0(0) \Vert \\
& \phantom{\leq}\, + \delta \sup\limits_{\tau \in [ 0 , t-r+\epsilon ]} \; e^{\sigma\tau} \Vert v(\tau) \Vert \\
& \phantom{\leq}\, + \dfrac{\delta}{\Vert C \Vert} \sup\limits_{\tau \in [ 0 , t-r+\epsilon ]} \; e^{\sigma\tau} \Vert p(\tau) \Vert ,
\end{align*}
where 
\begin{equation*}
\delta \triangleq \dfrac{M_\lambda \Vert C \Vert}{\lambda-\sigma} e^{\sigma r}
\left[ e^{\sigma \epsilon} ( e^{\Vert A \Vert \epsilon} - 1) + 1 - e^{-(\lambda-\sigma) \epsilon} \right] .
\end{equation*}
From the small gain assumption (\ref{eq: prel lemma - small gain condition}) and a continuity argument in $\sigma = 0$, we select $\sigma \in (0,\lambda)$ such that $\delta < 1$. Noting that the supremums appearing in the latter estimate are finite, we deduce that, for all $t \geq 0$,  
\begin{align*}
\sup\limits_{\tau \in [ 0 , t ]} \; e^{\sigma \tau} \Vert v(\tau) \Vert
& \leq \dfrac{M_\lambda e^{\sigma (r+\epsilon)} \left[ e^{\Vert A \Vert \epsilon} - 1 \right]}{1-\delta} \Vert x_0(0) \Vert \\
& \phantom{\leq}\; + \sup\limits_{\tau \in [ 0 , r + \epsilon ]} \; e^{\sigma\tau} \Vert v(\tau) \Vert \\
& \phantom{\leq}\; + \dfrac{\delta}{\Vert C \Vert (1-\delta)} \sup\limits_{\tau \in [ 0 , t ]} \; e^{\sigma\tau} \Vert p(\tau) \Vert . 
\end{align*}
Using (\ref{eq: proof prel lemma - int 4}), we obtain that, for all $t \geq 0$,  
\begin{align}
& \sup\limits_{\tau \in [ 0 , t ]} \; e^{\sigma \tau} \Vert x(\tau) \Vert \nonumber \\
& \leq
M_\lambda \left\{ 1 + \dfrac{M_\lambda \Vert C \Vert e^{\sigma (r+\epsilon)} \left[ e^{\Vert A \Vert \epsilon} - 1 \right]}{(1-\delta)(\lambda-\sigma)} \right\} \Vert x_0(0) \Vert \label{eq: proof prel lemma - int 5} \\
& \phantom{\leq}\, + \dfrac{M_\lambda \Vert C \Vert}{\lambda - \sigma} \sup\limits_{\tau \in [ 0 , r+\epsilon ]} \; e^{\sigma\tau} \Vert v(\tau) \Vert \nonumber \\
& \phantom{\leq}\, + \dfrac{M_\lambda}{(1-\delta)(\lambda-\sigma)} \sup\limits_{\tau \in [ 0 , t ]} \; e^{\sigma\tau} \Vert p(\tau) \Vert . \nonumber
\end{align}
It remains now to evaluate $\sup\limits_{\tau \in [ 0 , r+\epsilon ]} \; e^{\sigma\tau} \Vert v(\tau) \Vert$. To do so, we note from the definition of $v$ that
\begin{align}
& \sup\limits_{\tau \in [ 0 , r+\epsilon ]} \; e^{\sigma\tau} \Vert v(\tau) \Vert \nonumber \\
& \leq
2 e^{\sigma(r+\epsilon)} \left( 
\sup\limits_{\tau \in [ -r-\epsilon , 0 ]} \; \Vert x_0(\tau) \Vert 
+ \sup\limits_{\tau \in [ 0 , 2 \epsilon ]} \; \Vert x(\tau) \Vert 
\right) . \label{eq: proof prel lemma - int 6}
\end{align}
Based on (\ref{eq: prel lemma}), a standard application of Gr{\"o}nwall's inequality shows the existence of constants $\gamma_0,\gamma_1 > 0$, which only depend on $A,C,r,\epsilon$, such that
\begin{align}
\sup\limits_{\tau \in [ 0 , 2\epsilon ]} \; \Vert x(\tau) \Vert 
& \leq \gamma_0 \sup\limits_{\tau \in [ -r-\epsilon , 0 ]} \; \Vert x_0(\tau) \Vert + \gamma_1 \sup\limits_{\tau \in [ 0 , 2\epsilon ]} \;  e^{\sigma\tau}\Vert p(\tau) \Vert . \label{eq: proof prel lemma - int 7}
\end{align}
Combining (\ref{eq: proof prel lemma - int 5}-\ref{eq: proof prel lemma - int 7}), we deduce the existence of constants $M \geq 1$ and $N > 0$ such that, for all $t \geq 0$,
\begin{align}
& \sup\limits_{\tau \in [ 0 , t ]} \; e^{\sigma \tau} \Vert x(\tau) \Vert \nonumber \\
& \leq
M \sup\limits_{\tau \in [ -r-\epsilon , 0 ]} \; \Vert x_0(\tau) \Vert
+ N \sup\limits_{\tau \in [ 0 , \max(t,2
\epsilon) ]} \; e^{\sigma\tau} \Vert p(\tau) \Vert . \label{eq: proof prel lemma - int 8}
\end{align}

\emph{Step 4: derivation of the claimed estimate (\ref{eq: lemma - claimed estimate}).}
To conclude, it remains to show that $\sup\limits_{\tau \in [ 0 , \max(t,2\epsilon)]} \; e^{\sigma\tau} \Vert p(\tau) \Vert$ can be replaced by $\sup\limits_{\tau \in [ 0 , t ]} \; e^{\sigma\tau} \Vert p(\tau) \Vert$ in (\ref{eq: proof prel lemma - int 8}). This is obviously true for $t \geq 2\epsilon$, as well as $t = 0$ because $M \geq 1$. Thus, we focus on the case $0 < t < 2\epsilon$. Let $T \in ( 0 , 2\epsilon )$ be arbitrary. Let $\zeta_n \in \mathcal{C}^0(\mathbb{R}_+;\mathbb{R})$ with $0 \leq \zeta_n \leq 1$, $\left. \zeta_n \right\vert_{[0,T]} = 1$ and $\left. \zeta_n \right\vert_{[T+(2\epsilon-T)/n,+\infty)} = 0$ for $n \geq 1$. We define $p_n = \zeta_n p \in \mathcal{C}^0(\mathbb{R}_+;\mathbb{K})$ and we denote by $x_n$ the solution of (\ref{eq: prel lemma}) associated with the initial condition $x_0$ and the disturbance $p_n$. As $p_n(t) = p(t)$ for all $0 \leq t \leq T$, we obtain that $x_n(t) = x(t)$ for all $0 \leq t \leq T$. Therefore, we obtain by applying (\ref{eq: proof prel lemma - int 8}) to $x_n$ at time $t = T$ that, for all $n \geq 1$, 
\begin{align*}
& \sup\limits_{\tau \in [ 0 , T ]} \; e^{\sigma \tau} \Vert x(\tau) \Vert \\
& \leq
M \sup\limits_{\tau \in [ -r-\epsilon , 0 ]} \; \Vert x_0(\tau) \Vert
+ N \sup\limits_{\tau \in [ 0 , 2\epsilon ]} \; e^{\sigma\tau} \Vert p_n(\tau) \Vert \\
& \underset{n \rightarrow + \infty}{\longrightarrow} M \sup\limits_{\tau \in [ -r-\epsilon , 0 ]} \; \Vert x_0(\tau) \Vert
+ N \sup\limits_{\tau \in [ 0 , T ]} \; e^{\sigma\tau} \Vert p(\tau) \Vert ,
\end{align*}
where the limit holds by a continuity argument. Thus, the claimed estimate (\ref{eq: lemma - claimed estimate}) holds.
\end{proof}

\subsection{Study of the truncated model}

We apply the result of Lemma~\ref{lem: prel lemma} to the study of the finite-dimensional truncated model composed of (\ref{eq: ODE satisfies by Y}) and (\ref{def: boundary control system - closed-loop - cont input}).

\begin{lemma}\label{lem: ISS estimate finite dimensional part - preliminary}
Under the assumptions of Theorem~\ref{thm: ISS mild solutions}, there exist $\sigma , C_1 , C_2, C_3, \overline{C}_4, \overline{C}_5, \overline{C}_6 > 0$ such that, for any $D \in \mathcal{C}^1(\mathbb{R}_+;\mathbb{R})$ with $\vert D - D_0 \vert \leq \delta$, $X_0 \in \mathcal{H}$, and  $d_1,d_2 \in \mathcal{C}^1(\mathbb{R}_+;\mathbb{K}^m)$, $Y$ defined by (\ref{eq: definition Y}), where $(X,u) \in \mathcal{C}^0(\mathbb{R}_+;\mathcal{H}) \times \mathcal{C}^1([-D_0-\delta,+\infty);\mathbb{K}^m)$ is the mild solution of the closed-loop system (\ref{def: boundary control system - closed-loop}) associated with $(D,X_0,d_1,d_2)$, satisfies
\begin{align}
\Vert Y(t) \Vert 
& \leq C_1 e^{-\sigma t} \Vert X_0 \Vert_\mathcal{H} 
+ C_2 \sup\limits_{\tau \in [0,t]} e^{- \sigma (t-\tau)} \Vert d_1(\tau) \Vert \label{eq: ISS estimate Y} \\
& \phantom{\leq}\; + C_3 \sup\limits_{\tau \in [0,\max(t-(D_0-\delta),0)]} e^{- \sigma (t-\tau)} \Vert d_2(\tau) \Vert \nonumber
\end{align}
for all $t \geq 0$. Furthermore, the control law $u$ satisfies (\ref{eq: ISS estimate command}) with $\kappa = \sigma$ for all $t \geq 0$.
\end{lemma}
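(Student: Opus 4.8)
The plan is to reduce the closed-loop truncated dynamics \eqref{eq: ODE satisfies by Y}--\eqref{def: boundary control system - closed-loop - cont input} to the delay-difference form \eqref{eq: prel lemma} treated in Lemma~\ref{lem: prel lemma} by means of a predictor transformation, and then to invoke that lemma. First I would introduce the predictor state
\[
Z(t) = Y(t) + \int_{\max(t-D_0,0)}^{t} e^{(t-s-D_0)A_{N_0}} B_{N_0} u(s)\diff s ,
\]
so that the implicit control law \eqref{def: boundary control system - closed-loop - cont input} reads compactly as $u(t) = \varphi(t)\{ K Z(t) + d_2(t)\}$. Differentiating $Z$ and using \eqref{eq: ODE satisfies by Y} together with the Leibniz rule, the two boundary terms of the integral produce $e^{-D_0 A_{N_0}}B_{N_0}u(t) - B_{N_0}u(t-D_0)$, whence for $t \ge D_0$
\[
\dot Z(t) = A_{N_0} Z(t) + e^{-D_0A_{N_0}}B_{N_0}u(t) + B_{N_0}\{u(t-D(t)) - u(t-D_0)\} + B_{N_0} d_1(t) .
\]

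Next I would fix $T_0 = t_0 + D_0 + \delta$. For $t \ge T_0$ each of the instants $t$, $t-D(t)$, $t-D_0$ exceeds $t_0$, so $\varphi = 1$ there and $u(t) = KZ(t)+d_2(t)$, $u(t-D(t)) - u(t-D_0) = K\{Z(t-D(t)) - Z(t-D_0)\} + \{d_2(t-D(t)) - d_2(t-D_0)\}$. Substituting and using $A_\mathrm{cl} = A_{N_0} + e^{-D_0 A_{N_0}}B_{N_0}K$ gives, for $t \ge T_0$,
\[
\dot Z(t) = A_\mathrm{cl} Z(t) + B_{N_0}K\{Z(t-D(t)) - Z(t-D_0)\} + p(t) ,
\]
with $p(t) = B_{N_0}\{d_2(t-D(t)) - d_2(t-D_0)\} + e^{-D_0A_{N_0}}B_{N_0}d_2(t) + B_{N_0}d_1(t)$. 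Writing $D(t) = D_0 + \delta \, d(t)$ with $\vert d\vert \le 1$, this is exactly \eqref{eq: prel lemma} (after the time shift $t\mapsto t-T_0$) with $A = A_\mathrm{cl}$, $C = B_{N_0}K$, $r = D_0$, $\epsilon = \delta$, $q\equiv 1$; indeed $t-r-\epsilon d(t) = t-D(t)$ and $t-r = t-D_0$. The small-gain hypothesis \eqref{eq: prel lemma - small gain condition} is then precisely the standing assumption \eqref{eq: thm infinite dim system - small gain condition} of Theorem~\ref{thm: ISS mild solutions}, so Lemma~\ref{lem: prel lemma} applies and yields $\sigma>0$ together with an exponential bound on $\Vert Z(t)\Vert$ for $t \ge T_0$ in terms of $\sup_{[T_0-D_0-\delta,T_0]}\Vert Z\Vert$ and $\sup_{\tau} e^{-\sigma(t-\tau)}\Vert p(\tau)\Vert$.

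It then remains to translate this bound into \eqref{eq: ISS estimate Y} and \eqref{eq: ISS estimate command}. The key point is that the difference $d_2(t-D(t)) - d_2(t-D_0)$ in $p$ is controlled by $2\sup\Vert d_2\Vert$ over the delayed window through the triangle inequality, so that no time derivative of $d_2$ is ever produced; this is the exact mechanism delivering a \emph{strict} ISS estimate. Bounding $\Vert p(\tau)\Vert$ by $\Vert d_1(\tau)\Vert$ and by $\Vert d_2\Vert$ over the relevant windows, and recalling $u = \varphi\{KZ+d_2\}$, gives \eqref{eq: ISS estimate command} with $d_2$ ranging over $[0,t]$ (the instantaneous terms $e^{-D_0A_{N_0}}B_{N_0}d_2(t)$ and the direct $d_2$ in $u$ force the full window). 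For the estimate \eqref{eq: ISS estimate Y} on $Y$ I would instead exploit that, in its own dynamics \eqref{eq: ODE satisfies by Y}, $d_2$ reaches $Y$ only through the delayed control $u(t-D(t))$, whose argument never exceeds $t-(D_0-\delta)$; tracking this delayed dependence (rather than the crude bound $\Vert Y\Vert\le\Vert Z\Vert$ plus the integral term, which would spuriously reinstate the instantaneous $d_2$) produces the delayed window $[0,\max(t-(D_0-\delta),0)]$.

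Finally, the transition interval $[0,T_0]$ is handled by the well-posedness Lemma~\ref{lemma: well-posedness closed-loop system} and a Gr\"onwall estimate on the coupled, bounded-horizon pair $(Y,u)$: since $T_0$ is a fixed constant, this bounds $\sup_{[T_0-D_0-\delta,T_0]}\Vert Z\Vert$, $\sup_{[0,T_0]}\Vert Y\Vert$ and $\sup_{[-D_0-\delta,T_0]}\Vert u\Vert$ by a constant multiple of $\Vert Y_0\Vert + \sup_{[0,T_0]}\Vert d_1\Vert + \sup_{[0,T_0]}\Vert d_2\Vert$, with $\Vert Y_0\Vert\le C\Vert X_0\Vert_\mathcal{H}$ by the Riesz bound \eqref{eq: Riesz basis - inequality}. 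Converting these finite-horizon suprema into the fading-memory norms $\sup_{\tau\in[0,t]} e^{-\sigma(t-\tau)}\Vert d_i(\tau)\Vert$ via $e^{-\sigma(t-T_0)} \le e^{\sigma T_0}e^{-\sigma(t-\tau)}$ for $0\le\tau\le T_0$, and absorbing the constants $e^{\sigma T_0}$, yields the claimed estimates for $t \ge T_0$; the bound on $[0,T_0]$ is immediate since there $e^{-\sigma t}$ is bounded below. I expect the main obstacle to be the bookkeeping forced by the transition signal $\varphi$, which destroys the clean predictor identity $u=KZ+d_2$ on $[0,T_0]$ and must be isolated as a bounded transient, together with the careful accounting of the fading-memory windows --- in particular securing the delayed window for $d_2$ in the $Y$-estimate without ever generating a $\dot d_2$ term.
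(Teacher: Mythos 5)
Your proposal follows essentially the same route as the paper: the Artstein/predictor state $Z$, the observation that $u=\varphi\{KZ+d_2\}$, the computation of $\dot Z$ reducing for $t\geq t_0+D_0+\delta$ to exactly the perturbed delay-difference form of Lemma~\ref{lem: prel lemma} with $A=A_\mathrm{cl}$, $C=B_{N_0}K$, $r=D_0$, $\epsilon=\delta$, the same perturbation $p$, the Gr\"onwall treatment of the transition interval, and the recovery of $u$ and $Y$ from $Z$. All of that matches the paper's argument and is correct.

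The one step that is not secured is the delayed window $[0,\max(t-(D_0-\delta),0)]$ for $d_2$ in the $Y$-estimate (\ref{eq: ISS estimate Y}). You correctly identify the causal fact that $d_2$ reaches $Y$ only through $u(\cdot-D(\cdot))$, whose argument never exceeds $t-(D_0-\delta)$; but ``tracking this delayed dependence'' directly through the dynamics (\ref{eq: ODE satisfies by Y}) cannot by itself produce a decaying bound, because $A_{N_0}$ carries the unstable modes: the variation-of-constants formula for $Y$ involves $e^{A_{N_0}(t-\tau)}$, which grows, so plugging the bound on $\Vert u\Vert$ into it yields exploding constants. The only route to a decaying estimate on $Y$ passes through $Z$ (whose dynamics and whose relation $Y=Z-\int e^{(t-D_0-s)A_{N_0}}B_{N_0}u(s)\,\mathrm{d}s$ both involve $d_2$ instantaneously), so you first obtain (\ref{eq: prel ISS estimate Y}) with $d_2$ over the full window $[0,t]$. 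The paper then converts your causality observation into the refined window by a truncation argument: replace $d_2$ by $\zeta_n d_2$ with $\zeta_n$ a smooth cutoff equal to $1$ on $(-\infty,T-(D_0-\delta)]$ and vanishing shortly after, note that the solution $X$ (hence $Y$) on $[0,T]$ and $u$ on $[-D_0-\delta,T-(D_0-\delta)]$ are unchanged, apply the full-window estimate to the truncated disturbance, and let $n\to+\infty$. Your proof needs this (or an equivalent) mechanism spelled out; with it, the argument is complete and coincides with the paper's.
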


\begin{proof}
Let $\delta \in (0,D_0)$ satisfying the small gain condition (\ref{eq: thm infinite dim system - small gain condition}) be given. Let $\sigma,N > 0$ and $M \geq 1$ be the constants provided by Lemma~\ref{lem: prel lemma} for $A = A_\mathrm{cl}$, $C = B_{N_0} K$, $q = 1$, $r = D_0$, and $\epsilon = \delta$. We introduce the Artstein transformation~\cite{artstein1982linear,richard2003time} by defining, for all $t \geq 0$,  
\begin{equation}\label{eq: Artstein transformation}
Z(t) = Y(t) + \int_{t-D_0}^t e^{(t-D_0-s)A_{N_0}} B_{N_0} u(s) \diff s .
\end{equation}
As $u(\tau) = 0$ for $\tau \leq 0$, we obtain that $u = \varphi K Z + \varphi d_2$. Taking the time derivative, (\ref{eq: ODE satisfies by Y}) yields for all $t \geq 0$
\begin{align}
\dot{Z}(t) 
& = \left\{ A_{N_0} + \varphi(t) e^{-D_0 A_{N_0}} B_{N_0} K \right\} Z(t) \label{eq: lemma finite-dim - ODE Z} \\
& \phantom{=}\; + B_{N_0} K \left\{ [\varphi Z](t-D(t)) - [\varphi Z](t-D_0) \right\} \nonumber \\
& \phantom{=}\; + B_{N_0} d_1(t) + \varphi(t) e^{-D_0 A_{N_0}} B_{N_0} d_2(t) \nonumber \\
& \phantom{=}\; + B_{N_0} \left\{ [\varphi d_2](t-D(t)) - [\varphi d_2](t-D_0) \right\} . \nonumber
\end{align}
We first study the case $t \geq t_1 \triangleq t_0 + D_0 + \delta$. We have for all $t \geq t_1$ that $\varphi(t)=\varphi(t-D_0)=\varphi(t-D(t))=1$ and thus
\begin{align*}
\dot{Z}(t) 
& = A_\mathrm{cl} Z(t) + B_{N_0} K \left\{ Z(t-D(t)) - Z(t-D_0) \right\} \\
& \phantom{=}\; + B_{N_0} d_1(t) + e^{-D_0 A_{N_0}} B_{N_0} d_2(t) \\
& \phantom{=}\; + B_{N_0} \left\{ d_2(t-D(t)) - d_2(t-D_0) \right\} .
\end{align*}
Consequently, by applying Lem.~\ref{lem: prel lemma} to the above ODE with $x(t) = Z(t+t_1)$, it follows from (\ref{eq: lemma - claimed estimate}) that, for all $t \geq t_1$,
\begin{align}
& \Vert Z(t) \Vert \label{eq: estimatation finite dimensional part Z - t geq t0} \\
& \leq M e^{- \sigma (t-t_1)} \sup\limits_{\tau \in [ t_0 , t_1 ]} \Vert Z(\tau) \Vert
+ \tilde{N}_1 \sup\limits_{\tau \in [t_1,t]} e^{-\sigma (t-\tau)} \Vert d_1(\tau) \Vert \nonumber \\
& \phantom{\leq}\, + \tilde{N}_2 \sup\limits_{\tau \in [t_0,t]} e^{-\sigma (t-\tau)} \Vert d_2(\tau) \Vert , \nonumber
\end{align}
with $\tilde{N}_1 = N \Vert B_{N_0} \Vert$ and 
\begin{equation*}
\tilde{N}_2 = N \left\{ \Vert e^{-D_0 A_{N_0}} B_{N_0} \Vert + \Vert B_{N_0} \Vert e^{\sigma D_0} ( e^{\sigma\delta} + 1 ) \right\} .
\end{equation*}
In the case $0 \leq t \leq t_1$, based on (\ref{eq: lemma finite-dim - ODE Z}), a standard application of Gr{\"o}nwall's inequality shows the existence of $\gamma_0,\gamma_1,\gamma_2 > 0$, which only depend of $A_{N_0},B_{N_0},K,D_0,\delta,t_0,\sigma$, such that
\begin{align}
\Vert Z(t) \Vert 
& \leq \gamma_0 e^{-\sigma t} \Vert Y(0) \Vert 
+ \gamma_1 \sup\limits_{\tau \in [0,t]} e^{-\sigma (t-\tau)} \Vert d_1(\tau) \Vert \label{eq: estimatation finite dimensional part Z - t leq t0 - bis} \\
& \phantom{\leq}\; + \gamma_2 \sup\limits_{\tau \in [0,t]} e^{-\sigma (t-\tau)} \Vert d_2(\tau) \Vert , \nonumber
\end{align}
for all $0 \leq t \leq t_1$, where it has been used $Z(0)=Y(0)$. Thus, combining (\ref{eq: estimatation finite dimensional part Z - t geq t0}-\ref{eq: estimatation finite dimensional part Z - t leq t0 - bis}), we obtain that, for all $t \geq 0$, 
\begin{align}
\Vert Z(t) \Vert 
& \leq \gamma_3 e^{-\sigma t} \Vert Y(0) \Vert 
+ \gamma_4 \sup\limits_{\tau \in [0,t]} e^{-\sigma (t-\tau)} \Vert d_1(\tau) \Vert \label{eq: ISS estimate Z} \\
& \phantom{\leq}\; + \gamma_5 \sup\limits_{\tau \in [0,t]} e^{-\sigma (t-\tau)} \Vert d_2(\tau) \Vert \nonumber 
\end{align}
with $\gamma_3 = M e^{\sigma t_1} \gamma_0$, $\gamma_4 = M e^{\sigma t_1} \gamma_1 + \tilde{N}_1$, and $\gamma_5 = M e^{\sigma t_1} \gamma_2 + \tilde{N}_2$. From $u = \varphi K Z + \varphi d_2$ and (\ref{eq: Riesz basis - inequality}), we deduce that (\ref{eq: ISS estimate command}) holds for $\kappa = \sigma$ with $\overline{C}_4 = \Vert K \Vert \gamma_3 / \sqrt{m_R}$, $\overline{C}_5 = \Vert K \Vert \gamma_4$, and $\overline{C}_6 = \Vert K \Vert \gamma_5 + 1$. Now, using estimates (\ref{eq: Riesz basis - inequality}), (\ref{eq: ISS estimate command}) with $\kappa = \sigma$, and (\ref{eq: ISS estimate Z}) into (\ref{eq: Artstein transformation}) and the fact that, for $i \in \{1,2\}$,
\begin{align*}
& \int_{\max(t-D_0,0)}^t \sup\limits_{\tau \in [0,s]} e^{-\sigma (s-\tau)} \Vert d_i(\tau) \Vert \diff s \\
& \qquad\qquad\qquad\leq D_0 e^{\sigma D_0} \sup\limits_{\tau \in [0,t]} e^{- \sigma (t-\tau)} \Vert d_i(\tau) \Vert ,
\end{align*}
we obtain that
\begin{align}
\Vert Y(t) \Vert 
& \leq C_1 e^{-\sigma t} \Vert X_0 \Vert_\mathcal{H} 
+ C_2 \sup\limits_{\tau \in [0,t]} e^{- \sigma (t-\tau)} \Vert d_1(\tau) \Vert \label{eq: prel ISS estimate Y} \\
& \phantom{\leq}\; + C_3 \sup\limits_{\tau \in [0,t]} e^{- \sigma (t-\tau)} \Vert d_2(\tau) \Vert \nonumber 
\end{align}
for all $t \geq 0$, with 
\begin{align*}
C_1 & = \gamma_3 / \sqrt{m_R} + e^{D_0 (\sigma + \Vert A_{N_0} \Vert)} \Vert B_{N_0} \Vert \overline{C}_4 / \sigma , \\
C_2 & = \gamma_4 + D_0 e^{D_0 ( \sigma + \Vert A_{N_0} \Vert )} \Vert B_{N_0} \Vert \overline{C}_5 , \\
C_3 & = \gamma_5 + D_0 e^{D_0 ( \sigma + \Vert A_{N_0} \Vert )} \Vert B_{N_0} \Vert \overline{C}_6 .
\end{align*}
To conclude the proof, it remains to show that we can substitute in estimate (\ref{eq: prel ISS estimate Y}) the term $\sup\limits_{\tau \in [0,t]} e^{- \sigma (t-\tau)} \Vert d_2(\tau) \Vert$ by the term $\sup\limits_{\tau \in [0,\max(t-(D_0-\delta),0)]} e^{- \sigma (t-\tau)} \Vert d_2(\tau) \Vert$. To do so, let $T \geq 0$ be arbitrary. Let $\zeta_n \in \mathcal{C}^1(\mathbb{R};\mathbb{R})$ with $0 \leq \zeta_n \leq 1$ be such that $\left. \zeta_n \right\vert_{(-\infty,T-(D_0-\delta)]} = 1$ and $\left. \zeta_n \right\vert_{[T-(D_0-\delta)+(D_0-\delta)/n,+\infty]} = 0$ for $n \geq 1$. Then we define $d_{2,n} = \zeta_n d_2  \in \mathcal{C}^1(\mathbb{R}_+;\mathbb{K}^m)$. Thus we can introduce $(X_n,u_n)$ the mild solution of (\ref{def: boundary control system - closed-loop}) associated with $(D,X_0,d_1,d_{2,n})$. From~\cite[Sec.~III.C]{lhachemi2019feedback} (see also~\cite{bresch2018new} in the case $\varphi=1$), it can be seen that $\left. X \right\vert_{[0,T]} = \left. X_n \right\vert_{[0,T]}$ and $\left. u \right\vert_{[-D_0-\delta,T-(D_0-\delta)]} = \left. u_n \right\vert_{[-D_0-\delta,T-(D_0-\delta)]}$. Applying the ISS estimate (\ref{eq: prel ISS estimate Y}) at time $t = T$ to $X_n$ for any $n \geq 1$, we obtain that 
\begin{align*}
& \Vert Y(T) \Vert \\
& \leq C_1 e^{-\sigma T} \Vert X_0 \Vert_\mathcal{H} 
+ C_2 \sup\limits_{\tau \in [0,T]} e^{- \sigma (T-\tau)} \Vert d_1(\tau) \Vert \\
& \phantom{\leq}\; + C_3 \sup\limits_{\tau \in [0,T]} e^{- \sigma (T-\tau)} \Vert \zeta_n(\tau) d_2(\tau) \Vert .
\end{align*}
By letting $n \rightarrow + \infty$, a continuity argument shows that (\ref{eq: ISS estimate Y}) holds at time $t = T$. As $T \geq 0$ is arbitrary, this concludes the proof.
\end{proof}

\section{Exponential ISS of the infinite-dimensional system}\label{sec: exponential ISS of the infinite-dimensional system}

This section is devoted to the proof of the main result of this paper: namely, Theorem~\ref{thm: ISS mild solutions}. Let $\sigma > 0$ be provided by Lemma~\ref{lem: ISS estimate finite dimensional part - preliminary}. Let $0 < \kappa < \min(\alpha,\sigma)$ be given and define $\epsilon = \kappa / \alpha \in (0,1)$. First, we infer from $\xi = \sup\limits_{n \geq N_0 +1} \left\vert \dfrac{\lambda_n}{\operatorname{Re} \lambda_n} \right\vert < \infty$ that the following estimate holds true for all $n \geq N_0 + 1$
\begin{align}
\left\vert \dfrac{\lambda_n}{\operatorname{Re} \lambda_n + \kappa} \right\vert
& = \left\vert \dfrac{\lambda_n}{\operatorname{Re} \lambda_n} \right\vert \times \left\vert \dfrac{\operatorname{Re} \lambda_n}{\operatorname{Re} \lambda_n + \kappa} \right\vert \nonumber \\
& \leq \xi \left\vert \dfrac{- \alpha}{-\alpha + \kappa} \right\vert
\leq \dfrac{\alpha \xi}{\alpha - \kappa} , \label{eq: estimate abs(lambda_n/(Re(lambda_n)+kappa) for 1 leq k leq N0}
\end{align}
where we have used the facts that $\operatorname{Re} \lambda_n \leq - \alpha < - \kappa < 0$ and the function $x \rightarrow x/(x+\kappa)$ is positive and strictly increasing for $x \in (-\infty , - \kappa)$. Now, from the integral expression (\ref{eq: coeff in Riesz basis integral form}) of the coefficient of projection $c_n$ and 
using the definition of the boundary input (\ref{eq: boundary input}), we have for all $t \geq 0$
\begin{align}
& \vert c_n(t) \vert \nonumber \\
& \leq 
e^{\operatorname{Re} \lambda_n t} \vert c_n(0) \vert \label{eq: intermediate estimate 1}\\
& \phantom{\leq}\; + \vert \lambda_n \vert \int_0^t e^{\operatorname{Re} \lambda_n (t-\tau)} \vert \left< B u(\tau-D(\tau)) , \psi_n \right>_\mathcal{H} \vert \diff\tau \nonumber \\
& \phantom{\leq}\; + \int_0^t e^{\operatorname{Re} \lambda_n (t-\tau)} \vert \left< \mathcal{A} B u(\tau-D(\tau)) , \psi_n \right>_\mathcal{H} \vert \diff\tau \nonumber \\
& \phantom{\leq}\; + \vert \lambda_n \vert \int_0^t e^{\operatorname{Re} \lambda_n (t-\tau)} \vert \left< B d_1(\tau) , \psi_n \right>_\mathcal{H} \vert \diff\tau \nonumber \\ 
& \phantom{\leq}\; + \int_0^t e^{\operatorname{Re} \lambda_n (t-\tau)} \vert \left< \mathcal{A} B d_1(\tau) , \psi_n \right>_\mathcal{H} \vert \diff\tau . \nonumber
\end{align}
We now evaluate the different terms on the right hand side of (\ref{eq: intermediate estimate 1}). Denoting by $e_1, \ldots , e_m$ the canonical basis of $\mathbb{K}^m$, we have $d_1(t) = \sum\limits_{k=1}^{m} d_{1,k}(t) e_k$ with $d_{1,k}(t) \in \mathbb{K}$. Then, noting that $\vert d_{1,k}(t) \vert \leq \Vert d_1(t) \Vert$, we have for all $n \geq N_0 + 1$ and $t \geq 0$
\begin{align}
& \vert \lambda_n \vert \int_0^t e^{\operatorname{Re} \lambda_n (t-\tau)} \vert \left< B d_1(\tau) , \psi_n \right>_\mathcal{H} \vert \diff\tau \nonumber \\
& \leq 
\vert \lambda_n \vert \sum\limits_{k=1}^{m} \vert \left< B e_k , \psi_n \right>_\mathcal{H} \vert \int_0^t e^{\operatorname{Re} \lambda_n (t-\tau)} \Vert d_{1}(\tau) \Vert \diff\tau \nonumber \\
& \leq
\vert \lambda_n \vert \sum\limits_{k=1}^{m} \vert \left< B e_k , \psi_n \right>_\mathcal{H} \vert \int_0^t e^{(1-\epsilon) \operatorname{Re} \lambda_n (t-\tau)} \diff\tau \nonumber \\
& \phantom{\leq}\; \times \sup\limits_{\tau \in [0,t]} e^{\epsilon \operatorname{Re} \lambda_n (t-\tau)} \Vert d_{1}(\tau) \Vert \nonumber \\
& \leq
\dfrac{1}{1-\epsilon} \left\vert \dfrac{\lambda_n}{\operatorname{Re}\lambda_n} \right\vert \sum\limits_{k=1}^{m} \vert \left< B e_k , \psi_n \right>_\mathcal{H} \vert \sup\limits_{\tau \in [0,t]} e^{-\epsilon\alpha (t-\tau)} \Vert d_{1}(\tau) \Vert \nonumber \\
& \leq
\dfrac{\alpha\xi}{\alpha-\kappa} \sum\limits_{k=1}^{m} \vert \left< B e_k , \psi_n \right>_\mathcal{H} \vert \sup\limits_{\tau \in [0,t]} e^{-\kappa (t-\tau)} \Vert d_{1}(\tau) \Vert . \label{eq: intermediate estimate - d1 - 1st integral term}
\end{align}
Similarly, we have for all $n \geq N_0 + 1$ and $t \geq 0$
\begin{align}
& \int_0^t e^{\operatorname{Re} \lambda_n (t-\tau)} \vert \left< \mathcal{A} B d_1(\tau) , \psi_n \right>_\mathcal{H} \vert \diff\tau \nonumber \\
& \leq \dfrac{1}{\alpha-\kappa} \sum\limits_{k=1}^{m} \vert \left< \mathcal{A} B e_k , \psi_n \right>_\mathcal{H} \vert \sup\limits_{\tau \in [0,t]} e^{-\kappa (t-\tau)} \Vert d_1(\tau) \Vert \label{eq: intermediate estimate - d1 - 2nd integral term} .
\end{align}
We now estimate the two remaining integral terms involving the control input $u$ on the right-hand side of (\ref{eq: intermediate estimate 1}).  We note that these two integrals are null for $t \leq D_0 - \delta$ because $u(\tau)=0$ for $\tau \leq 0$. Thus we focus on the case $t \geq D_0-\delta$. First, we evaluate the following integral:
\begin{align*}
\mathcal{I}_n(t) 
& = \int_{0}^t e^{-\operatorname{Re} \lambda_n \tau} \Vert u(\tau - D(\tau)) \Vert \diff\tau \\ 
& = \int_{0}^t e^{-\operatorname{Re} \lambda_n \tau} \chi_{\{s-D(s) \geq 0 \}}(\tau) \Vert u(\tau - D(\tau)) \Vert \diff\tau
\end{align*}
for $t \geq D_0-\delta$. To do so, we note that
\begin{align*}
& \int_{0}^t e^{-\operatorname{Re} \lambda_n \tau}  \chi_{\{s-D(s) \geq 0 \}}(\tau) \sup\limits_{s \in [0,\tau-D(\tau)]} e^{-\kappa ((\tau-D(\tau))-s)} \Vert d_i(s) \Vert \diff\tau \\
& \leq \dfrac{e^{\kappa (D_0+\delta)}}{\vert \operatorname{Re} \lambda_n+\kappa \vert} e^{- ( \operatorname{Re} \lambda_n+\kappa) t} \sup\limits_{s \in [0,t-(D_0-\delta)]} e^{\kappa s} \Vert d_i(s) \Vert .
\end{align*}
Therefore, recalling that $0 < \kappa < \sigma$, the use of (\ref{eq: ISS estimate command}) provided by Lemma~\ref{lem: ISS estimate finite dimensional part - preliminary} yields
\begin{equation*}
\mathcal{I}_n(t) \leq \dfrac{e^{\kappa (D_0+\delta)}}{\vert \operatorname{Re} \lambda_n + \kappa \vert} e^{-\operatorname{Re} \lambda_n t} \Delta(t) 
\end{equation*}
with
\begin{align*}
\Delta(t) & = 
\overline{C}_4 e^{-\kappa t} \Vert X_0 \Vert_\mathcal{H} + \overline{C}_5 \sup\limits_{s \in [0,t-(D_0-\delta)]} e^{-\kappa (t-s)} \Vert d_1(s) \Vert \\
& \phantom{=}\; + \overline{C}_6 \sup\limits_{s \in [0,t-(D_0-\delta)]} e^{-\kappa (t-s)} \Vert d_2(s) \Vert. 
\end{align*}
With $u(t) = \sum\limits_{k=1}^{m} u_k(t) e_k$  where $u_k(t) \in \mathbb{K}$, we have that $\vert u_k(t) \vert \leq \Vert u(t) \Vert$ for all $1 \leq k \leq m$. Recalling that $\operatorname{Re} \lambda_n \leq - \alpha < - \kappa < 0$ for any $n \geq N_0 +1$, we obtain that, for all $n \geq N_0 +1$ and $t \geq D_0-\delta$,
\begin{align}
& \vert \lambda_n \vert \int_0^t e^{\operatorname{Re} \lambda_n (t-\tau)} \vert \left< B u(\tau-D(\tau)) , \psi_n \right>_\mathcal{H} \vert \diff\tau \nonumber \\
& \leq \vert \lambda_n \vert \sum\limits_{k=1}^{m} \vert \left< B e_k , \psi_n \right>_\mathcal{H} \vert e^{\operatorname{Re} \lambda_n t} \mathcal{I}_n(t) \nonumber \\
& \leq \dfrac{\alpha \xi e^{\kappa (D_0+\delta)}}{\alpha-\kappa} \sum\limits_{k=1}^{m} \vert \left< B e_k , \psi_n \right>_\mathcal{H} \vert \Delta(t) \label{eq: intermediate estimate - Z - 1st integral term} 
\end{align}
and
\begin{align}
& \int_0^t e^{\operatorname{Re} \lambda_n (t-\tau)} \vert \left< \mathcal{A} B u(\tau-D(\tau)) , \psi_n \right>_\mathcal{H} \vert \diff\tau \nonumber \\
& \leq \sum\limits_{k=1}^{m} \vert \left< \mathcal{A} B e_k , \psi_n \right>_\mathcal{H} \vert e^{\operatorname{Re} \lambda_n t} \mathcal{I}_n(t) \nonumber \\
& \leq \dfrac{e^{\kappa (D_0+\delta)}}{\alpha-\kappa} \sum\limits_{k=1}^{m} \vert \left< \mathcal{A} B e_k , \psi_n \right>_\mathcal{H} \vert \Delta(t) . \label{eq: intermediate estimate - Z - 2nd integral term}
\end{align}
Based on (\ref{eq: intermediate estimate - d1 - 1st integral term}-\ref{eq: intermediate estimate - Z - 2nd integral term}), we deduce from (\ref{eq: intermediate estimate 1}), Young's inequality, estimates (\ref{eq: Riesz basis - inequality}), and the fact $\Vert Y(0) \Vert^2 
= \sum\limits_{n=1}^{N_0} \vert c_n(0) \vert^2$ that, for all $t \geq 0$, 
\begin{align}
\sum\limits_{n \geq N_0 + 1} \vert c_n(t) \vert^2 
& \leq \tilde{C}_1 e^{- 2 \kappa t} \Vert X_0 \Vert_\mathcal{H}^2 
+ \tilde{C}_2 \sup\limits_{\tau \in [0,t]} e^{-2\kappa(t-\tau)} \Vert d_1(\tau) \Vert^2 \nonumber \\
& \phantom{\leq}\; + \tilde{C}_3 \sup\limits_{\tau \in [0,\max(t-(D_0-\delta),0)]} e^{-2\kappa(t-\tau)} \Vert d_2(\tau) \Vert^2 , \label{eq: ISS estimate uncontrolled infinite-dimensional part}
\end{align}
where constants $\tilde{C}_i$ are given by 
\begin{subequations}
\begin{align*}
\tilde{C}_0 & = \alpha^2 \xi^2 \sum\limits_{k=1}^{m} \Vert B e_k \Vert_\mathcal{H}^2 + \sum\limits_{k=1}^{m} \Vert \mathcal{A} B e_k \Vert_\mathcal{H}^2 , \\
\tilde{C}_1 & = \dfrac{4}{m_R} \left( 1 + \dfrac{2 m \overline{C}_4^2 e^{2 \kappa (D_0+\delta)}}{(\alpha-\kappa)^2} \tilde{C}_0 \right) , \\
\tilde{C}_2 & = \dfrac{8m\left(1+\overline{C}_5 e^{ \kappa (D_0+\delta)}\right)^2}{m_R (\alpha-\kappa)^2} \tilde{C}_0 , \\
\tilde{C}_3 & = \dfrac{8m \overline{C}_6^2 e^{2\kappa(D_0+\delta)}}{m_R (\alpha-\kappa)^2} \tilde{C}_0 .
\end{align*}
\end{subequations}
Consequently, as
\begin{align*}
\Vert X(t) \Vert_\mathcal{H}
& \leq \sqrt{M_R \sum\limits_{n \geq 1} \vert c_n(t) \vert^2} \\
& \leq \sqrt{M_R} \left( \Vert Y(t) \Vert + \sqrt{\sum\limits_{n \geq N_0 + 1} \vert c_n(t) \vert^2} \right) ,
\end{align*}
we obtain from (\ref{eq: ISS estimate Y}) and (\ref{eq: ISS estimate uncontrolled infinite-dimensional part}) that the ISS estimate (\ref{eq: ISS estimate}) holds with $\overline{C}_i = \sqrt{M_R} \left( C_i + \sqrt{\tilde{C}_i} \right)$, $1 \leq i \leq 3$, which concludes the proof of Theorem~\ref{thm: ISS mild solutions}.

\section{Extension of the main result to continuous boundary perturbations}\label{sec: exponential ISS for relaxed initial conditions and perturbations}

The result stated in Theorem~\ref{thm: ISS mild solutions} deals with mild solutions associated with continuously differentiable boundary disturbances. However, as shown in~\cite{lhachemi2018iss}, the satisfaction of an ISS estimate, combined with the introduction of a proper concept of weak solution, can be employed to easily extend the obtained ISS estimate to boundary disturbances exhibiting relaxed regularity assumptions. Such a concept of weak solutions extends to abstract boundary control systems the concept
of weak solutions originally introduced for infinite-dimensional
nonhomogeneous Cauchy problems in~\cite{ball1977strongly} and further investigated in~\cite[Def.~3.1.6, Thm.~3.1.7, A.5.29]{Curtain2012} under a variational from. In this context and adopting the approach reported in~\cite{lhachemi2018iss}, we introduce the following concept of weak solution for the closed-loop dynamics (\ref{def: boundary control system - closed-loop}). 

\begin{definition}\label{def: weak solution}
Let $(\mathcal{A},\mathcal{B})$ be an abstract boundary control system such that Assumption~\ref{assum: A1} holds. Let $t_0,D_0 > 0$, $\delta \in (0,D_0)$, a transition signal $\varphi \in \mathcal{C}^1(\mathbb{R};\mathbb{R})$ over $[0,t_0]$, and $K \in \mathbb{K}^{m \times N_0}$ be arbitrary. For a time-varying delay $D \in \mathcal{C}^1(\mathbb{R}_+;\mathbb{R})$ with $\vert D - D_0 \vert \leq \delta$, an initial condition $X_0 \in \mathcal{H}$, and boundary perturbations $d_1,d_2\in\mathcal{C}^0(\mathbb{R}_+;\mathbb{K}^m)$, we say that $(X,u) \in \mathcal{C}^0(\mathbb{R}_+;\mathcal{H}) \times \mathcal{C}^0(\mathbb{R}_+;\mathbb{K}^m)$ is a weak solution of (\ref{def: boundary control system - closed-loop}) associated with $(D,X_0,d_1,d_2)$ if for any $T > 0$ and any $z \in \mathcal{C}^0([0,T];D(\mathcal{A}^*_0)) \cap \mathcal{C}^1([0,T];\mathcal{H})$ with\footnote{Such a function $z$ is called a test function over $[0,T]$.} $\mathcal{A}_0^* z \in \mathcal{C}^0([0,T];\mathcal{H})$ and $z(T)=0$, we have: 
\begin{align}
& \int_0^T \left< X(t), \mathcal{A}_0^* z(t) + \dfrac{\mathrm{d} z}{\mathrm{d} t}(t) \right>_\mathcal{H} \diff t \nonumber \\
& = - \left< X_0 , z(0) \right>_\mathcal{H} \label{eq: def weak solution} \\
& \phantom{=}\; + \int_0^T \left< B ( u(t-D(t)) + d_1(t) ) , \mathcal{A}_0^* z(t) \right>_\mathcal{H} \diff t \nonumber \\
& \phantom{=}\; - \int_0^T \left< \mathcal{A} B ( u(t-D(t)) + d_1(t) ) , z(t) \right>_\mathcal{H} \diff t \nonumber ,
\end{align}
with $B$ an arbitrarily given lifting operator associated with $(\mathcal{A},\mathcal{B})$ and where the control input $u$ satisfies $\left. u \right\vert_{[-D_0-\delta,0]} = 0$ and, for all $t \geq 0$, 
\begin{align}
u(t) & = \varphi(t) \Bigg\{  K Y(t) + d_2(t) \label{eq: def weak solution - control input} \\
& \phantom{=}\; \hspace{1cm} + K \int_{\max(t-D_0,0)}^{t} e^{(t-s-D_0)A_{N_0}} B_{N_0} u(s) \diff s  \Bigg\} \nonumber
\end{align}
with $Y$ defined by (\ref{eq: definition Y}).
\end{definition}

In particular, using the definition of the mild solutions (\ref{eq: def mild solution}) into the left hand side of (\ref{eq: def weak solution}), it is easy to show (see Appendix~\ref{annex: mild sol are weak sol} for details) that any mild solution is also a weak solution.

\begin{remark}
Following~\cite[Sec.~4]{lhachemi2018iss}, we have the following facts. 
\begin{itemize}
\item Definition~\ref{def: weak solution} is independent of a specifically selected lifting operator in the sense that the right hand side of (\ref{eq: def weak solution}) is unchanged when switching between different lifting operators $B$ associated with $(\mathcal{A},\mathcal{B})$.
\item $X_0$ is the initial condition of the weak solution in the sense that if $(X,u) \in \mathcal{C}^0(\mathbb{R}_+;\mathcal{H}) \times \mathcal{C}^0(\mathbb{R}_+;\mathbb{K}^m)$ is a weak solution of (\ref{def: boundary control system - closed-loop}) associated with $(D,X_0,d_1,d_2)$, then we have $X(0) = X_0$.
\end{itemize}
\end{remark}

We first state a preliminary result about the uniqueness of the weak solutions for the studied problem.

\begin{lemma}\label{lem: uniqueness weak solutions}
For any $D \in \mathcal{C}^1(\mathbb{R}_+;\mathbb{R})$ with $\vert D - D_0 \vert \leq \delta$, $X_0 \in \mathcal{H}$, and $d_1,d_2 \in \mathcal{C}^0(\mathbb{R}_+;\mathbb{K}^m)$, there exists at most one weak solution $(X,u) \in \mathcal{C}^0(\mathbb{R}_+;\mathcal{H}) \times \mathcal{C}^0(\mathbb{R}_+;\mathbb{K}^m)$ of the closed-loop system (\ref{def: boundary control system - closed-loop}) associated with $(D,X_0,d_1,d_2)$.
\end{lemma}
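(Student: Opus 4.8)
The plan is to exploit linearity. Let $(X_1,u_1)$ and $(X_2,u_2)$ be two weak solutions associated with the same data $(D,X_0,d_1,d_2)$, and set $\tilde X = X_1 - X_2$, $\tilde u = u_1 - u_2$, and $\tilde Y(t) = \left[\langle \tilde X(t),\psi_1\rangle_\mathcal{H},\ldots,\langle \tilde X(t),\psi_{N_0}\rangle_\mathcal{H}\right]^\top$. Subtracting the two copies of the weak identity (\ref{eq: def weak solution}), the contributions of $X_0$ and $d_1$ cancel, so that $\tilde X$ satisfies the homogeneous weak problem
\begin{align*}
& \int_0^T \left\langle \tilde X(t), \mathcal{A}_0^* z(t) + \dfrac{\mathrm{d}z}{\mathrm{d}t}(t)\right\rangle_\mathcal{H}\diff t \\
& = \int_0^T \left\langle B\tilde u(t-D(t)), \mathcal{A}_0^* z(t)\right\rangle_\mathcal{H}\diff t - \int_0^T\left\langle \mathcal{A}B\tilde u(t-D(t)), z(t)\right\rangle_\mathcal{H}\diff t
\end{align*}
for every test function $z$, while subtracting the two copies of (\ref{eq: def weak solution - control input}) shows that $\tilde u$ obeys the same implicit relation, now with the $d_2$ term removed and with $\tilde Y$ in place of $Y$. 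It therefore suffices to prove $\tilde X = 0$ and $\tilde u = 0$.

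First I would reduce the homogeneous weak problem to its scalar projections. For a fixed $n\in\mathbb{N}^*$ and an arbitrary $g\in\mathcal{C}^1([0,T];\mathbb{K})$ with $g(T)=0$, the choice $z(t)=g(t)\psi_n$ (with a conjugation adapted to the convention of $\langle\cdot,\cdot\rangle_\mathcal{H}$) is an admissible test function, because $\psi_n\in D(\mathcal{A}_0^*)$ is an eigenvector of $\mathcal{A}_0^*$ for the eigenvalue $\overline{\lambda_n}$. Inserting it, using $\langle\tilde X(t),\psi_n\rangle_\mathcal{H}=\tilde c_n(t)$ and the coefficients $b_{n,k}$ introduced before (\ref{eq: ODE satisfies by Y}), the identity collapses to the scalar weak formulation of $\dot{\tilde c}_n = \lambda_n\tilde c_n + \sum_{k=1}^m b_{n,k}\,[\tilde u(t-D(t))]_k$ subject to $\tilde c_n(0)=0$; compare with (\ref{eq: coeff in Riesz basis ODE}). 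Since this scalar Cauchy problem has a unique continuous solution, given by the Duhamel formula, each $\tilde c_n$ is entirely determined by $\tilde u$. Collecting the first $N_0$ modes yields
\begin{equation*}
\tilde Y(t) = \int_0^t e^{A_{N_0}(t-s)}B_{N_0}\,\tilde u(s-D(s))\diff s .
\end{equation*}

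The decisive step is then to close the loop on $\tilde u$. Substituting this expression for $\tilde Y$ into the difference of (\ref{eq: def weak solution - control input}) produces a self-contained causal integral equation
\begin{align*}
\tilde u(t) = \varphi(t)\Bigg\{ & K\int_0^t e^{A_{N_0}(t-s)}B_{N_0}\tilde u(s-D(s))\diff s \\
& + K\int_{\max(t-D_0,0)}^t e^{(t-s-D_0)A_{N_0}}B_{N_0}\tilde u(s)\diff s\Bigg\},
\end{align*}
with $\left.\tilde u\right\vert_{[-D_0-\delta,0]}=0$. I would then bound $\Vert\tilde u(t)\Vert$ on an arbitrary interval $[0,T]$ using $0\le\varphi\le1$: the argument $s-D(s)\le s-(D_0-\delta)\le s$ of the first (strictly delayed) integral and the upper limit $s\le t$ of the second (non-delayed) integral both yield contributions controlled by $\int_0^t\sup_{\tau\in[0,s]}\Vert\tilde u(\tau)\Vert\diff s$, so that, writing $M(t)=\sup_{s\in[0,t]}\Vert\tilde u(s)\Vert$, one obtains $M(t)\le C_T\int_0^t M(s)\diff s$ for some $C_T>0$. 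Gr\"onwall's inequality then forces $M\equiv0$, i.e.\ $\tilde u=0$ on $[0,T]$; as $T$ is arbitrary, $\tilde u\equiv0$, whence $\tilde Y=0$, every $\tilde c_n=0$, and finally $\tilde X=0$ by the series expansion (\ref{eq: series expansion Riesz basis}).

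The main obstacle is the rigorous execution of the first reduction: one must verify that $z(t)=g(t)\psi_n$ is genuinely admissible (in particular $\mathcal{A}_0^*z\in\mathcal{C}^0([0,T];\mathcal{H})$ and $z(T)=0$), track the complex conjugation dictated by the inner-product convention, and check that the forcing assembles exactly into $\sum_{k}b_{n,k}[\tilde u(t-D(t))]_k$, recovering both the scalar ODE and the zero initial condition from the freedom in $g$. The a priori worrying feature of the $\tilde u$-equation, namely that its second integral involves $\tilde u(s)$ up to $s=t$ rather than through a strict delay, is in fact harmless: this term appears under an integral sign and is therefore absorbed by the Gr\"onwall argument, so that uniqueness holds without invoking the small-gain condition (\ref{eq: thm infinite dim system - small gain condition}) or any smallness of $\delta$.
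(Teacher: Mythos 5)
Your proof is correct, and it reaches the conclusion by a genuinely different route than the paper's. The paper makes the same linearity reduction to the data $(D,0,0,0)$, but then argues by induction on the time intervals $[0,n(D_0-\delta)]$: the strict lower bound $D(t)\ge D_0-\delta$ ensures that on each new interval the forcing $u(t-D(t))$ is already known to vanish, so the single test function $z(t)=e^{-\overline{\lambda}_k t}\int_{T}^{t}\left<X(\tau),\psi_k\right>_{\mathcal{H}}e^{\overline{\lambda}_k\tau}\diff\tau\,\psi_k$ yields $\int_0^T\vert\left<X(t),\psi_k\right>_{\mathcal{H}}\vert^2\diff t=0$ directly, and Gr\"onwall applied to the implicit control law then annihilates $u$ on the next interval. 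You instead extract from (\ref{eq: def weak solution}), via the test functions $g(t)\psi_n$, the full forced scalar Cauchy problem for each mode together with its zero initial condition, convert it into the Duhamel representation of $\tilde Y$ in terms of $\tilde u$, and close a single Volterra-type inequality for $\tilde u$ over an arbitrary $[0,T]$, so one global Gr\"onwall step replaces the induction. What your route buys is a shorter, non-iterative argument and an explicit formula for $\tilde Y$; what it costs is the extra (standard but not free) verification that a merely continuous function satisfying the forced scalar weak identity must be the classical Duhamel solution --- the paper only ever needs the unforced version of this step, where the conclusion $\tilde c_n=0$ is immediate. Your closing observation is also accurate and matches the paper's heredity step: since the non-delayed term of (\ref{eq: def weak solution - control input}) enters only under an integral sign, Gr\"onwall applies and uniqueness requires neither the small-gain condition (\ref{eq: thm infinite dim system - small gain condition}) nor any smallness of $\delta$.
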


\begin{proof}
By linearity, it is sufficient to show that if $(X,u) \in \mathcal{C}^0(\mathbb{R}_+;\mathcal{H}) \times \mathcal{C}^0(\mathbb{R}_+;\mathbb{K}^m)$ satisfies 
\begin{align*}
& \int_0^T \left< X(t), \mathcal{A}_0^* z(t) + \dfrac{\mathrm{d} z}{\mathrm{d} t}(t) \right>_\mathcal{H} \diff t \\
& = 
\int_0^T \left< B u(t-D(t)) , \mathcal{A}_0^* z(t) \right>_\mathcal{H} \diff t \\
& \phantom{=}\; - \int_0^T \left< \mathcal{A} B u(t-D(t)) , z(t) \right>_\mathcal{H} \diff t \nonumber 
\end{align*}
for all $T>0$ and for all test function $z$ over $[0,T]$ with $\left. u \right\vert_{[-D_0-\delta,0]} = 0$ and 
\begin{equation*}
u(t) = \varphi(t) K \left\{  Y(t) + \int_{\max(t-D_0,0)}^{t} e^{(t-s-D_0)A_{N_0}} B_{N_0} u(s) \diff s \right\}
\end{equation*}
for all $t \geq 0$, then $X=0$ and $u=0$. We proceed by induction by showing that $\left. X \right\vert_{[0,n(D_0-\delta)] = 0}$ and $\left. u \right\vert_{[-D_0-\delta,(n-1)(D_0-\delta)] = 0}$ for all $n \geq 1$.

\textit{Initialization:} From $\left. u \right\vert_{[-D_0-\delta,0]} = 0$, we obtain for $T = D_0 - \delta$ that: 
\begin{equation*}
\int_0^{D_0-\delta} \left< X(t), \mathcal{A}_0^* z(t) + \dfrac{\mathrm{d} z}{\mathrm{d} t}(t) \right>_\mathcal{H} \diff t 
= 0
\end{equation*}
for any test function $z$ over $[0,D_0-\delta]$ because $t - D(t) \leq 0$ and thus $u(t-D(t))=0$ for all $t \leq D_0-\delta$. Using the test function $z(t) = e^{-\overline{\lambda}_k t} \int_{D_0 - \delta}^t \left< X(\tau) , \psi_k \right>_\mathcal{H} e^{\overline{\lambda}_k\tau} \diff\tau \, \psi_k$, we obtain that $\mathcal{A}_0^* z(t) + \dfrac{\mathrm{d} z}{\mathrm{d} t}(t) = \left< X(t) , \psi_k \right>_\mathcal{H} \psi_k$ and thus $\int_0^{D_0-\delta} \vert \left< X(t) , \psi_k \right>_\mathcal{H} \vert^2 \diff t = 0$. By continuity of $X$, we infer that $\left< X(t) , \psi_k \right>_\mathcal{H} = 0$ for all $t \in [0,D_0-\delta]$ and all $k \geq 1$. Then (\ref{eq: series expansion Riesz basis}) yields $X(t) = 0$ for all $t \in [0,D_0-\delta]$.

\textit{Heredity:} Let $n \geq 1$ be such that $\left. X \right\vert_{[0,n(D_0-\delta)]} = 0$ and $\left. u \right\vert_{[-D_0-\delta,(n-1)(D_0-\delta)] = 0}$. Then we have $\left. Y \right\vert_{[0,n(D_0-\delta)]} = 0$ and thus
\begin{equation*}
u(t) = \varphi(t) K \int_{\max(t-D_0,0)}^{t} e^{(t-s-D_0)A_{N_0}} B_{N_0} u(s) \diff s
\end{equation*}
for all $t \in [0,n(D_0-\delta)]$. Hence, using Gr{\"o}nwall's inequality (see also~\cite{bresch2018new} and~\cite[Sec.~III.C]{lhachemi2019feedback}), $u(t) = 0$ for all $t \in [0,n(D_0-\delta)]$. Thus, $u(t-D(t)) = 0$ for all $t \in [0,(n+1)(D_0-\delta)]$ and we obtain with $T = (n+1)(D_0-\delta)$ that
\begin{equation*}
\int_0^{(n+1)(D_0-\delta)} \left< X(t), \mathcal{A}_0^* z(t) + \dfrac{\mathrm{d} z}{\mathrm{d} t}(t) \right>_\mathcal{H} \diff t 
= 0
\end{equation*}
for all test function $z$ over $[0,(n+1)(D_0-\delta)]$. Using the test function $z(t) = e^{-\overline{\lambda}_k t} \int_{(n+1)(D_0-\delta)}^t \left< X(\tau) , \psi_k \right>_\mathcal{H} e^{\overline{\lambda}_k \tau} \diff\tau \, \psi_k$, we obtain that $\mathcal{A}_0^* z(t) + \dfrac{\mathrm{d} z}{\mathrm{d} t}(t) = \left< X(t) , \psi_k \right>_\mathcal{H} \psi_k$ and thus we infer that $\int_0^{(n+1)(D_0-\delta)} \vert \left< X(t) , \psi_k \right>_\mathcal{H} \vert^2 \diff t = 0$. By continuity of $X$, we infer that $\left< X(t) , \psi_k \right>_\mathcal{H} = 0$ for all $t \in [0,(n+1)(D_0-\delta)]$ and all $k \geq 1$. We deduce from (\ref{eq: series expansion Riesz basis}) that $X(t) = 0$ for all $t \in [0,(n+1)(D_0-\delta)]$. This completes the proof by induction.
\end{proof}

We can now state the main result of this section whose proof is an adapation of~\cite[][Thm.~3]{lhachemi2018iss}.

\begin{theorem}
In the context of both assumptions and conclusions of Theorem~\ref{thm: ISS mild solutions}, for any $D \in \mathcal{C}^1(\mathbb{R}_+;\mathbb{R})$ with $\vert D - D_0 \vert \leq \delta$, $X_0 \in \mathcal{H}$, and  $d_1,d_2 \in \mathcal{C}^0(\mathbb{R}_+;\mathbb{K}^m)$, there exists a unique weak solution $(X,u) \in \mathcal{C}^0(\mathbb{R}_+;\mathcal{H}) \times \mathcal{C}^0(\mathbb{R}_+;\mathbb{K}^m)$ associated with $(D,X_0,d_1,d_2)$ of the closed-loop system (\ref{def: boundary control system - closed-loop}). Furthermore, this weak solution satisfies the ISS estimates (\ref{eq: ISS estimate}-\ref{eq: ISS estimate command}) for all $t \geq 0$.
\end{theorem}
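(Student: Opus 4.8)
The plan is to proceed by a density argument: approximate the continuous disturbances by continuously differentiable ones, invoke Theorem~\ref{thm: ISS mild solutions} and Lemma~\ref{lemma: well-posedness closed-loop system} for the approximations, and then pass to the limit in both the weak formulation and the ISS estimates. Since uniqueness has already been established by Lemma~\ref{lem: uniqueness weak solutions}, only existence together with the validity of the estimates remains. First I would fix $D$, $X_0$, and $d_1,d_2 \in \mathcal{C}^0(\mathbb{R}_+;\mathbb{K}^m)$, and construct, by mollification, sequences $d_{1,k},d_{2,k} \in \mathcal{C}^1(\mathbb{R}_+;\mathbb{K}^m)$ converging to $d_1,d_2$ uniformly on every compact interval $[0,T]$. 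For each $k$, Lemma~\ref{lemma: well-posedness closed-loop system} provides a unique mild solution $(X_k,u_k)$ of (\ref{def: boundary control system - closed-loop}) associated with $(D,X_0,d_{1,k},d_{2,k})$; as noted after Definition~\ref{def: weak solution}, each such mild solution is also a weak solution, and by Theorem~\ref{thm: ISS mild solutions} each $(X_k,u_k)$ satisfies the ISS estimates (\ref{eq: ISS estimate}-\ref{eq: ISS estimate command}) with constants $\overline{C}_i$ that depend only on the (fixed) system data and are therefore independent of $k$.

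The next step exploits the affine dependence of the solution map on the data. By linearity, $(X_k-X_j,u_k-u_j)$ is the mild solution associated with $(D,0,d_{1,k}-d_{1,j},d_{2,k}-d_{2,j})$, so the initial-condition term drops out. Applying (\ref{eq: ISS estimate}) to this difference and bounding $e^{-\kappa(t-\tau)} \leq 1$ gives, for all $t \in [0,T]$,
\begin{align*}
\Vert X_k(t)-X_j(t) \Vert_\mathcal{H}
&\leq \overline{C}_2 \sup\limits_{\tau\in[0,T]} \Vert d_{1,k}(\tau)-d_{1,j}(\tau) \Vert \\
&\phantom{\leq}\; + \overline{C}_3 \sup\limits_{\tau\in[0,T]} \Vert d_{2,k}(\tau)-d_{2,j}(\tau) \Vert ,
\end{align*}
together with the analogous bound on $\Vert u_k(t)-u_j(t) \Vert$ coming from (\ref{eq: ISS estimate command}). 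Since each $(d_{i,k})_k$ is uniformly Cauchy on $[0,T]$, the sequences $(X_k)_k$ and $(u_k)_k$ are uniformly Cauchy on $[0,T]$ in $\mathcal{H}$ and $\mathbb{K}^m$ respectively; as $T>0$ is arbitrary, they converge uniformly on compacts to limits $X \in \mathcal{C}^0(\mathbb{R}_+;\mathcal{H})$ and $u \in \mathcal{C}^0(\mathbb{R}_+;\mathbb{K}^m)$.

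It then remains to identify $(X,u)$ as the weak solution associated with $(D,X_0,d_1,d_2)$ and to transfer the estimates. Fixing $T>0$ and a test function $z$ over $[0,T]$, the functions $z$ and $\mathcal{A}_0^* z$ are continuous, hence bounded, on $[0,T]$; combined with the uniform convergence of $X_k$, $u_k$, and $d_{1,k}$ on $[0,T]$, each integral term in (\ref{eq: def weak solution}) converges to the corresponding term written with $(X,u,d_1)$, so $(X,u)$ satisfies (\ref{eq: def weak solution}). Likewise $Y_k \to Y$ pointwise, since each component of $Y$ is the fixed continuous projection $\left< \cdot , \psi_n \right>_\mathcal{H}$ applied to $X_k \to X$ (cf.\ (\ref{eq: definition Y})), and the integral term in (\ref{eq: def weak solution - control input}) converges by the uniform convergence of $u_k$; hence the limit $u$ satisfies (\ref{eq: def weak solution - control input}), and the constraint $\left. u \right\vert_{[-D_0-\delta,0]}=0$ is preserved in the limit. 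Thus $(X,u)$ is a weak solution, necessarily the unique one by Lemma~\ref{lem: uniqueness weak solutions}. Finally, for each fixed $t$, writing (\ref{eq: ISS estimate}-\ref{eq: ISS estimate command}) for $(X_k,u_k)$ with the $k$-independent constants and letting $k\to\infty$ (using uniform convergence of $X_k$, $u_k$, and $d_{i,k}$ on $[0,t]$) yields the same estimates for $(X,u)$.

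The main obstacle I anticipate is the limit passage in the weak formulation: one must verify that \emph{every} term of (\ref{eq: def weak solution}), including the boundary-type pairings $\left< B(\cdot),\mathcal{A}_0^* z\right>_\mathcal{H}$ and $\left< \mathcal{A}B(\cdot),z\right>_\mathcal{H}$, converges under mere uniform convergence of the data on compacts, and that the limit qualifies as a genuine weak solution rather than just a limit of solutions. This is precisely where the weak formulation pays off, since $\mathcal{A}B$ is bounded by the boundary-control-system structure, making each pairing continuous in the $\mathcal{H}$-norm; the remaining verifications (uniformity of the constants, Cauchy property, and convergence of the implicit control equation) are routine once the affine structure and the boundedness of the test function on $[0,T]$ are exploited.
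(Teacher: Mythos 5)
Your proposal is correct and follows essentially the same route as the paper: approximate $d_1,d_2$ by $\mathcal{C}^1$ disturbances, use the linearity of the solution map together with the uniform ISS constants of Theorem~\ref{thm: ISS mild solutions} to get a Cauchy property, pass to the limit in the weak formulation (exploiting the boundedness of $B$ and $\mathcal{A}B$) and in the implicit control equation, and invoke Lemma~\ref{lem: uniqueness weak solutions} for uniqueness. The only cosmetic difference is that you build one global approximating sequence converging uniformly on compacts, which lets you skip the paper's explicit consistency/gluing step across nested intervals $[0,T_1]\subset[0,T_2]$.
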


\begin{proof}
We consider $\delta \in (0,D_0)$, $\kappa \in (0,\alpha)$, and the constants $\overline{C}_1, \overline{C}_2, \overline{C}_3, \overline{C}_4, \overline{C}_5, \overline{C}_6 > 0$ as provided by Theorem~\ref{thm: ISS mild solutions}. Let $D \in \mathcal{C}^1(\mathbb{R}_+;\mathbb{R})$ with $\vert D - D_0 \vert \leq \delta$, $X_0 \in \mathcal{H}$, and  $d_1,d_2 \in \mathcal{C}^0(\mathbb{R}_+;\mathbb{K}^m)$ be given. The uniqueness follows from Lemma~\ref{lem: uniqueness weak solutions}. We prove the existence.

For a given $T > 0$, as $\mathcal{C}^1([0,T];\mathbb{K}^m)$ is a dense subset of $\mathcal{C}^0([0,T];\mathbb{K}^m)$, we introduce $d_{1,n},d_{2,n} \in \mathcal{C}^1([0,T];\mathbb{K}^m)$ such that $(d_{1,n})_n$ and $(d_{2,n})_n$ converge uniformly over $[0,T]$ to $\left. d_1 \right\vert_{[0,T]}$ and $\left. d_2 \right\vert_{[0,T]}$, respectively. We introduce $(X_n,u_n) \in \mathcal{C}^0([0,T];\mathcal{H}) \times \mathcal{C}^1([-D_0-\delta,T];\mathbb{K}^m)$ the unique mild solution of the closed-loop system (\ref{def: boundary control system - closed-loop}) over $[0,T]$ associated with $(D,X_0,d_{1,n},d_{2,n})$. By linearity of (\ref{def: boundary control system - closed-loop}), $(X_n-X_m,u_n-u_m)$ is the unique mild solution of the closed-loop system (\ref{def: boundary control system - closed-loop}) over $[0,T]$ associated with $(D,0,d_{1,n}-d_{1,m},d_{2,n}-d_{2,m})$. Thus, we deduce from (\ref{eq: ISS estimate}-\ref{eq: ISS estimate command}) that both $(X_n)_n$ and $(u_n)_n$ are Cauchy sequences of the Banach spaces $C^0([0,T];\mathcal{H})$ and $C^0([0,T];\mathbb{K}^m)$, respectively. Thus
$X_{n} \underset{n \rightarrow +\infty}{\longrightarrow} X \in C^0([0,T];\mathcal{H})$ and $u_{n} \underset{n \rightarrow +\infty}{\longrightarrow} u \in C^0([0,T];\mathbb{K}^m)$ and $(X,u)$ satisfies the estimates (\ref{eq: ISS estimate}-\ref{eq: ISS estimate command}) for any $t \in [0,T]$. It is easy to see from (\ref{eq: ISS estimate}-\ref{eq: ISS estimate command}) that the obtained $X$ and $u$ are independent of the selected approximating sequences $(d_{1,n})_n$ and $(d_{2,n})_n$ but only depend on $D$, $X_0$, $d_1$, and $d_2$. 

For any given $0 < T_1 < T_2$, let $(X_1,u_1) \in C^0([0,T_1];\mathcal{H}) \times C^0([0,T_1];\mathbb{K}^m)$ and $(X_2,u_2) \in C^0([0,T_2];\mathcal{H}) \times C^0([0,T_2];\mathbb{K}^m)$ be the result of the above construction over the time intervals $[0,T_1]$ and $[0,T_2]$, respectively. By restricting the approximating sequences of $\left. d_1 \right\vert_{[0,T_2]}$ and $\left. d_2 \right\vert_{[0,T_2]}$ to the interval $[0,T_1]$, we obtain approximating sequences of $\left. d_1 \right\vert_{[0,T_1]}$ and $\left. d_2 \right\vert_{[0,T_1]}$. Then, we infer that $\left. X_2 \right\vert_{[0,T_1]} = X_1$ and $\left. u_2 \right\vert_{[0,T_1]} = u_1$. Consequently, we can define $(X,u) \in \mathcal{C}^0(\mathbb{R}_+;\mathcal{H}) \times \mathcal{C}^0(\mathbb{R}_+;\mathbb{K}^m)$ such that $(\left. X \right\vert_{[0,T]},\left. u \right\vert_{[0,T]})$ is the result of the above construction for any $T>0$. Then, $(X,u)$ satisfy the estimates (\ref{eq: ISS estimate}-\ref{eq: ISS estimate command}) for all $t \geq 0$.

It remains to show that $(X,u)$ is actually the weak solution associated with $(D,X_0,d_1,d_2)$ of the closed-loop system (\ref{def: boundary control system - closed-loop}). Let $T > 0$ be arbitrarily given. Let $(d_{1,n})_n \in \mathcal{C}^1([0,T];\mathbb{K}^m)^\mathbb{N}$ and $(d_{2,n})_n \in \mathcal{C}^1([0,T];\mathbb{K}^m)^\mathbb{N}$ be approximating sequences converging to $\left. d_1 \right|_{[0,T]}$ and $\left. d_2 \right|_{[0,T]}$, respectively. Thus, the corresponding sequence of mild solutions $(X_{n},u_n)_n$ converges uniformly to $(\left. X \right|_{[0,T]},\left. u \right|_{[0,T]})$. As mild solutions are weak solutions, we obtain that, for any test function $z$ over $[0,T]$ and any $n \geq 1$,
\begin{align*}
& \int_0^T \left< X_n(t), \mathcal{A}_0^* z(t) + \dfrac{\mathrm{d} z}{\mathrm{d} t}(t) \right>_\mathcal{H} \diff t \\
& = - \left< X_0 , z(0) \right>_\mathcal{H} \\
& \phantom{=}\; + \int_0^T \left< B ( u_n(t-D(t)) + d_{1,n}(t) ) , \mathcal{A}_0^* z(t) \right>_\mathcal{H} \diff t \\
& \phantom{=}\; - \int_0^T \left< \mathcal{A} B ( u_n(t-D(t)) + d_{1,n}(t) ) , z(t) \right>_\mathcal{H} \diff t 
\end{align*}
with $\left. u_n \right\vert_{[-D_0-\delta,0]} = 0$ and 
\begin{align*}
u_n(t) & = \varphi(t) \Bigg\{  K Y_n(t) + d_{2,n}(t) \\
& \phantom{=}\; \hspace{1cm} + K \int_{\max(t-D_0,0)}^{t} e^{(t-s-D_0)A_{N_0}} B_{N_0} u_n(s) \diff s \Bigg\}
\end{align*}
for all $t \in [0,T]$, where
\begin{equation*}
Y_n(t) 
= 
\begin{bmatrix}
\left< X_n(t) , \psi_1 \right>_\mathcal{H} &\ldots & \left< X_n(t) , \psi_{N_0} \right>_\mathcal{H}
\end{bmatrix}^\top .
\end{equation*}
Recalling that $\mathcal{A}B$ and $B$ are bounded, we obtain that (\ref{eq: def weak solution}-\ref{eq: def weak solution - control input}) hold for all $t \in [0,T]$ by letting $n \rightarrow + \infty$. As both $T > 0$ and the test function $z$ over $[0,T]$ have been arbitrarily selected, this concludes the proof.
\end{proof}

\section{Conclusion}\label{sec: conclusion}
This paper has investigated the input-to-state stabilization with respect to boundary disturbances of a class of diagonal infinite-dimensional systems via delay boundary control. First, a preliminary lemma regarding the robustness of a constant-delay predictor feedback with respect to uncertain and time-varying input delays has been derived under the form of an ISS estimate with fading memory of the disturbance input. This result was applied to a truncated model capturing the unstable modes of the studied infinite-dimensional system. Finally, this ISS property was extended to the closed-loop infinite-dimensional system, first considering mild solutions and then for weak solutions associated with disturbances exhibiting relaxed regularity assumptions.

\appendix
\section{Proof of Lemma~\ref{lemma: well-posedness closed-loop system}}\label{annex: proof lemma well-posedness}
%Appendix sections are coded under \verb+\appendix+.

As $\left. u \right\vert_{[-D_0-\delta,0]} = 0$, (\ref{eq: def mild solution}) is equivalent over $[0,D_0-\delta]$ to 
\begin{align*}
X(t) & = S(t) \{ X_0 - B d_1(0) \} + B d_1(t)  \\ 
& \phantom{=}\; + \int_0^t S(t-s) \{ \mathcal{A}B d_1(s) - B \dot{d}_1(s) \} \diff s , \nonumber
\end{align*}
which is well and uniquely defined as an element of $\mathcal{C}^0([0,D_0-\delta];\mathcal{H})$ with associated control input $u=0 \in \mathcal{C}^1([-D_0-\delta,0];\mathbb{K}^{m})$.

We proceed by induction. Assume that, for a given $n \in \mathbb{N}^*$, there exists a unique pair $(X,u) \in \mathcal{C}^0([0,n(D_0-\delta)];\mathcal{H}) \times \mathcal{C}^1([-D_0-\delta,(n-1)(D_0-\delta)])$ such that (\ref{eq: def mild solution}) holds over $[0,n(D_0-\delta)]$ and (\ref{def: boundary control system - closed-loop - cont input}) holds over $[-D_0-\delta,(n-1)(D_0-\delta)]$. In particular, reproducing the developments reported in Subsection~\ref{subsec: problem and control strategy}, $c_n$ is of class $\mathcal{C}^1$ over $[0,n(D_0-\delta)]$, and thus $Y \in \mathcal{C}^1([0,n(D_0-\delta)],\mathbb{K}^{N_0})$. We show that there exists a unique couple $(\tilde{X},\tilde{u}) \in \mathcal{C}^0([0,(n+1)(D_0-\delta)];\mathcal{H}) \times \mathcal{C}^1([-D_0-\delta,n(D_0-\delta)];\mathbb{K}^m)$ such that 
\begin{align}
\tilde{X}(t) & = S(t) \{ X_0 - B \tilde{v}(0) \} + B \tilde{v}(t) \label{eq: well-posedness induction - def mild solution} \\ 
& \phantom{=}\; + \int_0^t S(t-s) \{ \mathcal{A}B\tilde{v}(s) - B \dot{\tilde{v}}(s) \} \diff s \nonumber
\end{align}
with $\tilde{v}(t) = \tilde{u}(t-D(t)) + d_1(t)$ for all $t \in [0,(n+1)(D_0-\delta)]$ and where the control law is characterized by $\left. \tilde{u} \right\vert_{[-D_0-\delta,0]} = 0$ and, for all $t \in [0,n(D_0-\delta)]$,
\begin{align}
\tilde{u}(t) 
& = \varphi(t) \Bigg\{  K \tilde{Y}(t) + d_2(t) \label{eq: tilde_u} \\
& \phantom{=}\; \hspace{1cm} + K \int_{\max(t-D_0,0)}^{t} e^{(t-s-D_0)A_{N_0}} B_{N_0} \tilde{u}(s) \diff s \Bigg\} \nonumber 
\end{align}
with 
\begin{equation*}
\tilde{Y}(t) 
= \begin{bmatrix}
\left< \tilde{X}(t) , \psi_1 \right>_\mathcal{H} & \ldots & \left< \tilde{X}(t) , \psi_{N_0} \right>_\mathcal{H}
\end{bmatrix}^\top .
\end{equation*}
The induction assumption shows that $\left. \tilde{X} \right\vert_{[0,n(D_0-\delta)]} = X$ and $\left. \tilde{u} \right\vert_{[-D_0-\delta,(n-1)(D_0-\delta)]} = u$. In particular, we have $\tilde{Y}(t) = Y(t)$ for all $t \leq 0 \leq n(D_0-\delta)$. As $t - D(t) \leq n(D_0-\delta)$ for $t \leq (n+1)(D_0-\delta)$, we note that the control input $\tilde{u}$ over the time interval $[0,n(D_0-\delta)]$ is only defined by $X$ over the time interval $[0,n(D_0-\delta)]$ and does not depend on $\tilde{X}$ over $[n(D_0-\delta),(n+1)(D_0-\delta)]$. As $Y \in \mathcal{C}^0([0,n(D_0-\delta)];\mathbb{K}^{N_0})$, we obtain from~\cite[Sec.~III.C]{lhachemi2019feedback} (which is a direct extension of the result reported in~\cite{bresch2018new}, in the configuration $\varphi = 1$, to the case of a continuous function $\varphi$ satisfying $0 \leq \varphi \leq 1$) that the control $\tilde{u}$ given by the implicit equation (\ref{eq: tilde_u}) is well and uniquely defined on $[-D_0-\delta,n(D_0-\delta)]$ as an element of $\mathcal{C}^0([-D_0-\delta,n(D_0-\delta)];\mathbb{K}^m)$ and is such that $\left. \tilde{u} \right\vert_{[-D_0-\delta,(n-1)(D_0-\delta)]} = u$. Introducing
\begin{equation*}
Z(t) = Y(t) + \int_{t-D_0}^{t} e^{(t-D_0-s)A_{N_0}} B_{N_0} \tilde{u}(s) \diff s ,
\end{equation*}
which is such that $Z \in \mathcal{C}^1([0,n(D_0-\delta)];\mathbb{K}^{N_0})$, we can write $\tilde{u}(t) = \varphi(t) K Z(t) + \varphi(t) d_2(t)$ for all $t \in [0,n(D_0-\delta)]$. Thus $\tilde{u}\in\mathcal{C}^1([-D_0-\delta,n(D_0-\delta)])$ and we obtain that (\ref{eq: well-posedness induction - def mild solution}) defines a unique $\tilde{X} \in \mathcal{C}^0([0,(n+1)(D_0-\delta)];\mathcal{H})$. As the obtained $\tilde{X}$ and $\tilde{u}$ are extensions of $X$ and $u$, respectively, this completes the proof by induction.

\section{Mild solutions are weak solutions}\label{annex: mild sol are weak sol}
Let $(X,u) \in \mathcal{C}^0(\mathbb{R}_+;\mathcal{H}) \times \mathcal{C}^1([-D_0-\delta,+\infty);\mathbb{K}^m)$ be a mild solution of (\ref{def: boundary control system - closed-loop}). Then $X$ is given by (\ref{eq: def mild solution}) with $v$ the continuously differentiable function defined by (\ref{eq: boundary input}). For a given $T > 0$, let $z$ be a test function over $[0,T]$, i.e., $z \in \mathcal{C}^0([0,T];D(\mathcal{A}^*_0)) \cap \mathcal{C}^1([0,T];\mathcal{H})$ with $\mathcal{A}_0^* z \in \mathcal{C}^0([0,T];\mathcal{H})$ and $z(T)=0$. We need to show that the system trajectory $X$ satisfies the identity (\ref{eq: def weak solution}). From the basic properties of $C_0$-semigroups and the fundamental theorem of calculus, we infer that
\begin{align*}
& \int_0^T \left< S(t) \{ X_0 - B v(0) \} , \mathcal{A}_0^* z(t) + \dfrac{\mathrm{d} z}{\mathrm{d} t}(t) \right>_\mathcal{H} \diff t \\
& = \int_0^T \left< X_0 - B v(0) , S^*(t) \left\{ \mathcal{A}_0^* z(t) + \dfrac{\mathrm{d} z}{\mathrm{d} t}(t) \right\} \right>_\mathcal{H} \diff t \\
& = \int_0^T \dfrac{\mathrm{d}}{dt} \left( \left< X_0 - B v(0) , S^*(t) z(t) \right>_\mathcal{H} \right) \diff t \\
& = - \left< X_0 - B v(0) , z(0) \right>_\mathcal{H} .
\end{align*}
Using in addition the properties of the Bochner integral and Fubini theorem, we obtain that 
\begin{align*}
& \int_0^T \left< \int_0^t S(t-s) \mathcal{A}Bv(s) \diff s , \mathcal{A}_0^* z(t) + \dfrac{\mathrm{d} z}{\mathrm{d} t}(t) \right>_\mathcal{H} \diff t \\
& = \int_0^T \int_0^t \left< \mathcal{A}Bv(s) , S^*(t-s) \left\{ \mathcal{A}_0^* z(t) + \dfrac{\mathrm{d} z}{\mathrm{d} t}(t) \right\} \right>_\mathcal{H} \diff s \diff t \\
& = \int_0^T \int_s^T \dfrac{\mathrm{d}}{dt} \left( \left< \mathcal{A}Bv(s) , S^*(t-s) z(t) \right>_\mathcal{H} \right) \diff t \diff s \\
& = - \int_0^T \left< \mathcal{A}Bv(s) ,z(s) \right>_\mathcal{H} \diff s .
\end{align*}
Finally, the same approach yields
\begin{align*}
& - \int_0^T \left< \int_0^t S(t-s) B\dot{v}(s) \diff s , \mathcal{A}_0^* z(t) + \dfrac{\mathrm{d} z}{\mathrm{d} t}(t) \right>_\mathcal{H} \diff t \\
& = \int_0^T \left< B\dot{v}(s) ,z(s) \right>_\mathcal{H} \diff s \\
& = - \left< Bv(0) ,z(0) \right>_\mathcal{H} - \int_0^T \left< Bv(s) ,\dfrac{\mathrm{d}z}{\mathrm{d}t}(s) \right>_\mathcal{H} \diff s ,
\end{align*}
where, recalling that $B$ is bounded, the last equality has been derived via an integration by parts. Now, the substitution of the definition of mild solutions (\ref{eq: def mild solution}) into the integral term $\int_0^T \left< X(t), \mathcal{A}_0^* z(t) + \dfrac{\mathrm{d} z}{\mathrm{d} t}(t) \right>_\mathcal{H} \diff t$ and the use of the three latter identities show that the identity (\ref{eq: def weak solution}) is indeed satisfied.

%\verb+\printcredits+ command is used after appendix sections to list 
%author credit taxonomy contribution roles tagged using \verb+\credit+ 
%in frontmatter.
%
%\printcredits

%% Loading bibliography style file
%\bibliographystyle{model1-num-names}
\bibliographystyle{cas-model2-names}

% Loading bibliography database
\bibliography{cas-refs}

\begin{thebibliography}{38}
\expandafter\ifx\csname natexlab\endcsname\relax\def\natexlab#1{#1}\fi
\providecommand{\url}[1]{\texttt{#1}}
\providecommand{\href}[2]{#2}
\providecommand{\path}[1]{#1}
\providecommand{\DOIprefix}{doi:}
\providecommand{\ArXivprefix}{arXiv:}
\providecommand{\URLprefix}{URL: }
\providecommand{\Pubmedprefix}{pmid:}
\providecommand{\doi}[1]{\href{http://dx.doi.org/#1}{\path{#1}}}
\providecommand{\Pubmed}[1]{\href{pmid:#1}{\path{#1}}}
\providecommand{\bibinfo}[2]{#2}
\ifx\xfnm\relax \def\xfnm[#1]{\unskip,\space#1}\fi
%Type = Article
\bibitem[{Artstein(1982)}]{artstein1982linear}
\bibinfo{author}{Artstein, Z.}, \bibinfo{year}{1982}.
\newblock \bibinfo{title}{Linear systems with delayed controls: a reduction}.
\newblock \bibinfo{journal}{IEEE Transactions on Automatic Control}
  \bibinfo{volume}{27}, \bibinfo{pages}{869--879}.
%Type = Article
\bibitem[{Ball(1977)}]{ball1977strongly}
\bibinfo{author}{Ball, J.}, \bibinfo{year}{1977}.
\newblock \bibinfo{title}{Strongly continuous semigroups, weak solutions, and
  the variation of constants formula}.
\newblock \bibinfo{journal}{Proceedings of the American Mathematical Society}
  \bibinfo{volume}{63}, \bibinfo{pages}{370--373}.
%Type = Article
\bibitem[{Bresch-Pietri et~al.(2018)Bresch-Pietri, Prieur and
  Tr{\'e}lat}]{bresch2018new}
\bibinfo{author}{Bresch-Pietri, D.}, \bibinfo{author}{Prieur, C.},
  \bibinfo{author}{Tr{\'e}lat, E.}, \bibinfo{year}{2018}.
\newblock \bibinfo{title}{New formulation of predictors for finite-dimensional
  linear control systems with input delay}.
\newblock \bibinfo{journal}{Systems \& Control Letters} \bibinfo{volume}{113},
  \bibinfo{pages}{9--16}.
%Type = Inproceedings
\bibitem[{Bribiesca~Argomedo et~al.(2012)Bribiesca~Argomedo, Witrant and
  Prieur}]{argomedo2012d}
\bibinfo{author}{Bribiesca~Argomedo, F.}, \bibinfo{author}{Witrant, E.},
  \bibinfo{author}{Prieur, C.}, \bibinfo{year}{2012}.
\newblock \bibinfo{title}{{$D^1$}-{I}nput-to-state stability of a time-varying
  nonhomogeneous diffusive equation subject to boundary disturbances}, in:
  \bibinfo{booktitle}{American Control Conference (ACC), 2012},
  \bibinfo{organization}{IEEE}. pp. \bibinfo{pages}{2978--2983}.
%Type = Article
\bibitem[{Cai et~al.(2017)Cai, Bekiaris-Liberis and Krstic}]{cai2017input}
\bibinfo{author}{Cai, X.}, \bibinfo{author}{Bekiaris-Liberis, N.},
  \bibinfo{author}{Krstic, M.}, \bibinfo{year}{2017}.
\newblock \bibinfo{title}{Input-to-state stability and inverse optimality of
  linear time-varying-delay predictor feedbacks}.
\newblock \bibinfo{journal}{IEEE Transactions on Automatic Control}
  \bibinfo{volume}{63}, \bibinfo{pages}{233--240}.
%Type = Book
\bibitem[{Christensen et~al.(2016)}]{christensen2016introduction}
\bibinfo{author}{Christensen, O.}, et~al., \bibinfo{year}{2016}.
\newblock \bibinfo{title}{An Introduction to Frames and {Riesz} Bases}.
\newblock \bibinfo{publisher}{Springer}.
%Type = Article
\bibitem[{Coron and Tr{\'e}lat(2004)}]{coron2004global}
\bibinfo{author}{Coron, J.M.}, \bibinfo{author}{Tr{\'e}lat, E.},
  \bibinfo{year}{2004}.
\newblock \bibinfo{title}{Global steady-state controllability of
  one-dimensional semilinear heat equations}.
\newblock \bibinfo{journal}{SIAM Journal on Control and Optimization}
  \bibinfo{volume}{43}, \bibinfo{pages}{549--569}.
%Type = Article
\bibitem[{Coron and Tr{\'e}lat(2006)}]{coron2006global}
\bibinfo{author}{Coron, J.M.}, \bibinfo{author}{Tr{\'e}lat, E.},
  \bibinfo{year}{2006}.
\newblock \bibinfo{title}{Global steady-state stabilization and controllability
  of {1D} semilinear wave equations}.
\newblock \bibinfo{journal}{Communications in Contemporary Mathematics}
  \bibinfo{volume}{8}, \bibinfo{pages}{535--567}.
%Type = Book
\bibitem[{Curtain and Zwart(2012)}]{Curtain2012}
\bibinfo{author}{Curtain, R.F.}, \bibinfo{author}{Zwart, H.},
  \bibinfo{year}{2012}.
\newblock \bibinfo{title}{An Introduction to Infinite-Dimensional Linear
  Systems Theory}. volume~\bibinfo{volume}{21}.
\newblock \bibinfo{publisher}{Springer Science \& Business Media}.
%Type = Article
\bibitem[{Fridman and Orlov(2009)}]{fridman2009exponential}
\bibinfo{author}{Fridman, E.}, \bibinfo{author}{Orlov, Y.},
  \bibinfo{year}{2009}.
\newblock \bibinfo{title}{Exponential stability of linear distributed parameter
  systems with time-varying delays}.
\newblock \bibinfo{journal}{Automatica} \bibinfo{volume}{45},
  \bibinfo{pages}{194--201}.
%Type = Article
\bibitem[{Guzm{\'a}n et~al.(2019)Guzm{\'a}n, Marx and
  Cerpa}]{guzman2019stabilization}
\bibinfo{author}{Guzm{\'a}n, P.}, \bibinfo{author}{Marx, S.},
  \bibinfo{author}{Cerpa, E.}, \bibinfo{year}{2019}.
\newblock \bibinfo{title}{Stabilization of the linear {Kuramoto-Sivashinsky}
  equation with a delayed boundary control}.
\newblock \bibinfo{journal}{{IFAC PapersOnLine}} \bibinfo{volume}{52},
  \bibinfo{pages}{70--75}.
%Type = Article
\bibitem[{Jacob et~al.(2018)Jacob, Nabiullin, Partington and
  Schwenninger}]{jacob2018infinite}
\bibinfo{author}{Jacob, B.}, \bibinfo{author}{Nabiullin, R.},
  \bibinfo{author}{Partington, J.R.}, \bibinfo{author}{Schwenninger, F.L.},
  \bibinfo{year}{2018}.
\newblock \bibinfo{title}{Infinite-dimensional input-to-state stability and
  {Orlicz} spaces}.
\newblock \bibinfo{journal}{SIAM Journal on Control and Optimization}
  \bibinfo{volume}{56}, \bibinfo{pages}{868--889}.
%Type = Article
\bibitem[{Jacob et~al.(2019)Jacob, Schwenninger and
  Zwart}]{jacob2018continuity}
\bibinfo{author}{Jacob, B.}, \bibinfo{author}{Schwenninger, F.L.},
  \bibinfo{author}{Zwart, H.}, \bibinfo{year}{2019}.
\newblock \bibinfo{title}{On continuity of solutions for parabolic control
  systems and input-to-state stability}.
\newblock \bibinfo{journal}{Journal of differential equations}
  \bibinfo{volume}{266}, \bibinfo{pages}{6284--6306}.
%Type = Article
\bibitem[{Karafyllis and Krstic(2013)}]{karafyllis2013delay}
\bibinfo{author}{Karafyllis, I.}, \bibinfo{author}{Krstic, M.},
  \bibinfo{year}{2013}.
\newblock \bibinfo{title}{Delay-robustness of linear predictor feedback without
  restriction on delay rate}.
\newblock \bibinfo{journal}{Automatica} \bibinfo{volume}{49},
  \bibinfo{pages}{1761--1767}.
%Type = Article
\bibitem[{Karafyllis and Krstic(2016)}]{karafyllis2016iss}
\bibinfo{author}{Karafyllis, I.}, \bibinfo{author}{Krstic, M.},
  \bibinfo{year}{2016}.
\newblock \bibinfo{title}{{ISS} with respect to boundary disturbances for {1-D}
  parabolic {PDEs}}.
\newblock \bibinfo{journal}{IEEE Transactions on Automatic Control}
  \bibinfo{volume}{61}, \bibinfo{pages}{3712--3724}.
%Type = Article
\bibitem[{Karafyllis and Krstic(2017)}]{karafyllis2017iss}
\bibinfo{author}{Karafyllis, I.}, \bibinfo{author}{Krstic, M.},
  \bibinfo{year}{2017}.
\newblock \bibinfo{title}{{ISS} in different norms for {1-D} parabolic {PDEs}
  with boundary disturbances}.
\newblock \bibinfo{journal}{SIAM Journal on Control and Optimization}
  \bibinfo{volume}{55}, \bibinfo{pages}{1716--1751}.
%Type = Book
\bibitem[{Karafyllis and Krstic(2019)}]{karafyllis2019preview}
\bibinfo{author}{Karafyllis, I.}, \bibinfo{author}{Krstic, M.},
  \bibinfo{year}{2019}.
\newblock \bibinfo{title}{Input-to-State Stability for {PDEs}}.
\newblock \bibinfo{publisher}{Springer}.
%Type = Article
\bibitem[{Krstic(2009)}]{krstic2009control}
\bibinfo{author}{Krstic, M.}, \bibinfo{year}{2009}.
\newblock \bibinfo{title}{Control of an unstable reaction-diffusion {PDE} with
  long input delay}.
\newblock \bibinfo{journal}{Systems \& Control Letters} \bibinfo{volume}{58},
  \bibinfo{pages}{773--782}.
%Type = Article
\bibitem[{Lhachemi and Prieur(2021)}]{lhachemi2019feedback}
\bibinfo{author}{Lhachemi, H.}, \bibinfo{author}{Prieur, C.},
  \bibinfo{year}{2021}.
\newblock \bibinfo{title}{Feedback stabilization of a class of diagonal
  infinite-dimensional systems with delay boundary control}.
\newblock \bibinfo{journal}{IEEE Transaction on Automatic Control, accepted, in
  press} .
%Type = Article
\bibitem[{Lhachemi et~al.(2019a)Lhachemi, Prieur and Shorten}]{lhachemi2019lmi}
\bibinfo{author}{Lhachemi, H.}, \bibinfo{author}{Prieur, C.},
  \bibinfo{author}{Shorten, R.}, \bibinfo{year}{2019}a.
\newblock \bibinfo{title}{An {LMI} condition for the robustness of
  constant-delay linear predictor feedback with respect to uncertain
  time-varying input delays}.
\newblock \bibinfo{journal}{Automatica} \bibinfo{volume}{109},
  \bibinfo{pages}{108551}.
%Type = Article
\bibitem[{Lhachemi et~al.(2019b)Lhachemi, Saussi{\'e}, Zhu and
  Shorten}]{lhachemi2019input}
\bibinfo{author}{Lhachemi, H.}, \bibinfo{author}{Saussi{\'e}, D.},
  \bibinfo{author}{Zhu, G.}, \bibinfo{author}{Shorten, R.},
  \bibinfo{year}{2019}b.
\newblock \bibinfo{title}{Input-to-state stability of a clamped-free damped
  string in the presence of distributed and boundary disturbances}.
\newblock \bibinfo{journal}{IEEE Transactions on Automatic Control, in press} .
%Type = Article
\bibitem[{Lhachemi and Shorten(2019a)}]{lhachemi2019boundary1}
\bibinfo{author}{Lhachemi, H.}, \bibinfo{author}{Shorten, R.},
  \bibinfo{year}{2019}a.
\newblock \bibinfo{title}{Boundary feedback stabilization of a
  reaction-diffusion equation with {Robin} boundary conditions and
  state-delay}.
\newblock \bibinfo{journal}{arXiv preprint arXiv:1911.10761} .
%Type = Article
\bibitem[{Lhachemi and Shorten(2019b)}]{lhachemi2019boundary2}
\bibinfo{author}{Lhachemi, H.}, \bibinfo{author}{Shorten, R.},
  \bibinfo{year}{2019}b.
\newblock \bibinfo{title}{Boundary input-to-state stabilization of a damped
  {Euler-Bernoulli} beam in the presence of a state-delay}.
\newblock \bibinfo{journal}{arXiv preprint arXiv:1912.01117} .
%Type = Article
\bibitem[{Lhachemi and Shorten(2019c)}]{lhachemi2018iss}
\bibinfo{author}{Lhachemi, H.}, \bibinfo{author}{Shorten, R.},
  \bibinfo{year}{2019}c.
\newblock \bibinfo{title}{{ISS} property with respect to boundary disturbances
  for a class of {Riesz-spectral} boundary control systems}.
\newblock \bibinfo{journal}{Automatica} \bibinfo{volume}{109},
  \bibinfo{pages}{108504}.
%Type = Article
\bibitem[{Lhachemi et~al.(2019c)Lhachemi, Shorten and
  Prieur}]{lhachemi2019control}
\bibinfo{author}{Lhachemi, H.}, \bibinfo{author}{Shorten, R.},
  \bibinfo{author}{Prieur, C.}, \bibinfo{year}{2019}c.
\newblock \bibinfo{title}{Control law realification for the feedback
  stabilization of a class of diagonal infinite-dimensional systems with delay
  boundary control}.
\newblock \bibinfo{journal}{IEEE Control Systems Letters} \bibinfo{volume}{3},
  \bibinfo{pages}{930--935}.
%Type = Article
\bibitem[{Mironchenko et~al.(2019)Mironchenko, Karafyllis and
  Krstic}]{mironchenko2019monotonicity}
\bibinfo{author}{Mironchenko, A.}, \bibinfo{author}{Karafyllis, I.},
  \bibinfo{author}{Krstic, M.}, \bibinfo{year}{2019}.
\newblock \bibinfo{title}{Monotonicity methods for input-to-state stability of
  nonlinear parabolic {PDEs} with boundary disturbances}.
\newblock \bibinfo{journal}{SIAM Journal on Control and Optimization}
  \bibinfo{volume}{57}, \bibinfo{pages}{510--532}.
%Type = Article
\bibitem[{Mironchenko and Prieur(2019)}]{mironchenko2019input}
\bibinfo{author}{Mironchenko, A.}, \bibinfo{author}{Prieur, C.},
  \bibinfo{year}{2019}.
\newblock \bibinfo{title}{Input-to-state stability of infinite-dimensional
  systems: recent results and open questions}.
\newblock \bibinfo{journal}{arXiv preprint arXiv:1910.01714} .
%Type = Inproceedings
\bibitem[{Mironchenko and Wirth(2016)}]{mironchenko2016restatements}
\bibinfo{author}{Mironchenko, A.}, \bibinfo{author}{Wirth, F.},
  \bibinfo{year}{2016}.
\newblock \bibinfo{title}{Restatements of input-to-state stability in infinite
  dimensions: what goes wrong}, in: \bibinfo{booktitle}{Proc. of 22th
  International Symposium on Mathematical Theory of Systems and Networks (MTNS
  2016)}, pp. \bibinfo{pages}{667--674}.
%Type = Article
\bibitem[{Mironchenko and Wirth(2018)}]{mironchenko2017characterizations}
\bibinfo{author}{Mironchenko, A.}, \bibinfo{author}{Wirth, F.},
  \bibinfo{year}{2018}.
\newblock \bibinfo{title}{Characterizations of input-to-state stability for
  infinite-dimensional systems}.
\newblock \bibinfo{journal}{IEEE Transactions on Automatic Control}
  \bibinfo{volume}{63}, \bibinfo{pages}{1692--1707}.
%Type = Article
\bibitem[{Prieur and Tr{\'e}lat(2019)}]{prieur2018feedback}
\bibinfo{author}{Prieur, C.}, \bibinfo{author}{Tr{\'e}lat, E.},
  \bibinfo{year}{2019}.
\newblock \bibinfo{title}{Feedback stabilization of a {1D} linear
  reaction-diffusion equation with delay boundary control}.
\newblock \bibinfo{journal}{IEEE Transactions on Automatic Control}
  \bibinfo{volume}{64}, \bibinfo{pages}{1415--1425}.
%Type = Article
\bibitem[{Richard(2003)}]{richard2003time}
\bibinfo{author}{Richard, J.P.}, \bibinfo{year}{2003}.
\newblock \bibinfo{title}{Time-delay systems: an overview of some recent
  advances and open problems}.
\newblock \bibinfo{journal}{Automatica} \bibinfo{volume}{39},
  \bibinfo{pages}{1667--1694}.
%Type = Article
\bibitem[{Russell(1978)}]{russell1978controllability}
\bibinfo{author}{Russell, D.L.}, \bibinfo{year}{1978}.
\newblock \bibinfo{title}{Controllability and stabilizability theory for linear
  partial differential equations: recent progress and open questions}.
\newblock \bibinfo{journal}{{SIAM} Review} \bibinfo{volume}{20},
  \bibinfo{pages}{639--739}.
%Type = Article
\bibitem[{Selivanov and Fridman(2016)}]{selivanov2016observer}
\bibinfo{author}{Selivanov, A.}, \bibinfo{author}{Fridman, E.},
  \bibinfo{year}{2016}.
\newblock \bibinfo{title}{Observer-based input-to-state stabilization of
  networked control systems with large uncertain delays}.
\newblock \bibinfo{journal}{Automatica} \bibinfo{volume}{74},
  \bibinfo{pages}{63--70}.
%Type = Article
\bibitem[{Solomon and Fridman(2015)}]{solomon2015stability}
\bibinfo{author}{Solomon, O.}, \bibinfo{author}{Fridman, E.},
  \bibinfo{year}{2015}.
\newblock \bibinfo{title}{Stability and passivity analysis of semilinear
  diffusion {PDEs} with time-delays}.
\newblock \bibinfo{journal}{International Journal of Control}
  \bibinfo{volume}{88}, \bibinfo{pages}{180--192}.
%Type = Incollection
\bibitem[{Sontag(2008)}]{sontag2008input}
\bibinfo{author}{Sontag, E.D.}, \bibinfo{year}{2008}.
\newblock \bibinfo{title}{Input to state stability: Basic concepts and
  results}, in: \bibinfo{booktitle}{Nonlinear and optimal control theory}.
  \bibinfo{publisher}{Springer}, pp. \bibinfo{pages}{163--220}.
%Type = Article
\bibitem[{Sontag(Apr. 1989)}]{sontag1989smooth}
\bibinfo{author}{Sontag, E.D.}, \bibinfo{year}{Apr. 1989}.
\newblock \bibinfo{title}{Smooth stabilization implies coprime factorization}.
\newblock \bibinfo{journal}{IEEE Transactions on Automatic Control}
  \bibinfo{volume}{34}, \bibinfo{pages}{435--443}.
%Type = Article
\bibitem[{Zheng and Zhu(2018a)}]{zheng2018giorgi}
\bibinfo{author}{Zheng, J.}, \bibinfo{author}{Zhu, G.}, \bibinfo{year}{2018}a.
\newblock \bibinfo{title}{A {De Giorgi} iteration-based approach for the
  establishment of {ISS} properties for {Burgers’} equation with boundary and
  in-domain disturbances}.
\newblock \bibinfo{journal}{IEEE Transactions on Automatic Control}
  \bibinfo{volume}{64}, \bibinfo{pages}{3476--3483}.
%Type = Article
\bibitem[{Zheng and Zhu(2018b)}]{zheng2017input}
\bibinfo{author}{Zheng, J.}, \bibinfo{author}{Zhu, G.}, \bibinfo{year}{2018}b.
\newblock \bibinfo{title}{Input-to-state stability with respect to boundary
  disturbances for a class of semi-linear parabolic equations}.
\newblock \bibinfo{journal}{Automatica} \bibinfo{volume}{97},
  \bibinfo{pages}{271--277}.

\end{thebibliography}

%\vskip3pt

%\bio{}
%Author biography without author photo.
%Author biography. Author biography. Author biography.
%Author biography. Author biography. Author biography.
%Author biography. Author biography. Author biography.
%Author biography. Author biography. Author biography.
%Author biography. Author biography. Author biography.
%Author biography. Author biography. Author biography.
%Author biography. Author biography. Author biography.
%Author biography. Author biography. Author biography.
%Author biography. Author biography. Author biography.
%\endbio
%
%\bio{figs/pic1}
%Author biography with author photo.
%Author biography. Author biography. Author biography.
%Author biography. Author biography. Author biography.
%Author biography. Author biography. Author biography.
%Author biography. Author biography. Author biography.
%Author biography. Author biography. Author biography.
%Author biography. Author biography. Author biography.
%Author biography. Author biography. Author biography.
%Author biography. Author biography. Author biography.
%Author biography. Author biography. Author biography.
%\endbio
%
%\bio{figs/pic1}
%Author biography with author photo.
%Author biography. Author biography. Author biography.
%Author biography. Author biography. Author biography.
%Author biography. Author biography. Author biography.
%Author biography. Author biography. Author biography.
%\endbio

\end{document}